\newtheorem{theorem}{Theorem}[section]
\newtheorem{lemma}[theorem]{Lemma}
\newtheorem{proposition}[theorem]{Proposition}
\newtheorem{corollary}[theorem]{Corollary}
\theoremstyle{definition}
\newtheorem{definition}[theorem]{Definition}
\theoremstyle{remark}
\newtheorem{remark}[theorem]{Remark}
\numberwithin{equation}{section}
\newtheorem*{theorem-non}{Theorem}
\begin{document}
\setcounter{page}{1}

\title[Paley-Wiener theorem ]{Real Paley-Wiener theorems for the linear canonical Dunkl transform}
 
\author[Umamaheswari S, Sandeep Kumar Verma and Hatem Mejjaoli]{Umamaheswari S\textsuperscript{1}, Sandeep Kumar Verma\textsuperscript{1} and Hatem Mejjaoli\textsuperscript{2}}

 \address{\textsuperscript{1}Department of Mathematics, SRM University-AP, Andhra Pradesh, Amaravati--522240, India\newline
 \textsuperscript{2}Department of Mathematics, Taibah University, College of Science, Al
Madinah Al Munawwarah, 30002, Saudi Arabia
 }
 \email{umasmaheswari98@gmail.com, sandeep16.iitism@gmail.com, \newline hmejjaoli@gmail.com}

%\thanks{}

\subjclass[2020]{33C52, 42B10, 43A15, 43A32, 46F12}

\keywords{Dunkl Transform, Linear Canonical Dunkl Transform, Linear Canonical Dunkl Sobolev space, Paley-Wiener Theorem}
\begin{abstract} 
We examine the Sobolev space associated with the linear canonical Dunkl transform and explore some properties of the linear canonical Dunkl operators. Building on these results, we establish a real Paley-Wiener theorem for the linear canonical Dunkl transform. Further, we characterize the square-integrable function $f$ whose linear canonical Dunkl transform of the function is supported in the polynomial domain.   Finally, we develop the Boas-type Paley-Wiener theorem for the linear canonical Dunkl transform.
\end{abstract}
\maketitle
\section{Introduction}
 Paley and Wiener characterized $L^2$
functions on a closed and bounded interval in terms of the image of the Fourier transform of square-integrable functions with compact support. Specifically, their result asserts that a function 
$f\in L^2(\mathbb{R})$ has compact support (i.e., its spectrum is compact) if and only if its Fourier transform  $\hat{f} \in L^2(\mathbb{R})$ extends into $\mathbb{C}$ as an entire function of exponential growth \cite{Paley}. This result is known as the classical complex Paley-Wiener theorem. However, the proof of this theorem does not readily extend to other integral transforms. 
Therefore, an alternative approach was developed by Bang \cite{Bang} to describe the Fourier transform of functions with compact support, avoiding the need for complexification. This approach is known as the real Paley-Wiener theorem, which states that a function $f$ has compact support if and only if
\begin{equation*}
\lim_{n \rightarrow \infty}  \left\|\frac{d^nf}{dx^n}\right\|^{\frac{1}{n}}_{L^p(\mathbb{R})} =  \mathop{\underset{x \in \mathbb{R}}{\text{sup}}}\,\{ |x|: x \in \text{supp}\,\hat{f} \}.
\end{equation*}
 In other words, Bang's theorem characterizes a function 
$f\in L^2(\mathbb{R})$ as having a compact spectrum, with the proof relying on specific properties of the Fourier transform. The term ``real" signifies that the support of the Fourier transform (FT) is determined based on the growth rates of $f$ on $\mathbb{R}$, rather than on $\mathbb{C}$ as in the classical complex Paley-Wiener theorem. Although Bang's theorem characterizes the square-integrable functions in the real setting, his proof does not naturally extend to other integral transforms. Independently, Tuan provided an alternative proof of Bang's theorem, primarily relying on the Parseval formula for the Fourier transform \cite{Tuan}. Building on Tuan’s proof techniques, many authors have extended the real Paley-Wiener theorem to other integral transforms by replacing the operator $\frac{d}{dx}$ with a second-order differential operator, whose eigenfunction acts as a kernel for the integral transformation. 
Simultaneously, Boas characterized the class of all square-integrable functions that vanish on a closed interval in 
$\mathbb{R}$  based on the behavior of their Fourier transform \cite{Boas}. Likewise, Tuan extended Boas's theorem to the class of integral transforms \cite{Zayed} and further explored Paley-Wiener-type theorems for non-convex and unbounded domains \cite{V.K. Tuan}. 
\\
During that period, significant progress was made in studying the real Paley-Wiener theorem in different settings, including the Siegel upper half-space, Clifford analysis,  and generalized functions, as demonstrated in works \cite{Arcozzi, Karunakaran, Kou, Lax}. Additionally, numerous studies have successfully extended the theory to various integral transforms, including the Hankel transform, Dunkl transform, singular Sturm-Liouville integral transform, and others \cite{de Jeu, Fe}.
\par
Notably, Andersen provided an elementary proof of Bang’s theorem \cite{N.B} and characterized the image of compactly supported smooth even functions under the Hankel transform as a subspace of the Schwartz space \cite{Arcozzi}. Later, Abreu extended the real Paley-Wiener theorem to the Koornwinder-Swarttouw $q$-Hankel transform within the framework of quantum calculus \cite{Abreu}. Sequentially, Trim\'eche established the real Paley-Wiener theorem for the Dunkl transform, and in collaboration with Chettaoui, introduced a new type Paley-Wiener theorem for the Dunkl transform \cite{Chettaoui, Trime'Che}. More recently, Li investigated the real-type Paley-Wiener theorem and the Boas-type theorem for the $(k, a)$ -generalized Fourier transform \cite{Li}. 
 These works motivate us to establish the real  Paley-Wiener theorem in the context of the linear canonical Dunkl transform (LCDT).
\par Ghazouani et al. introduced the linear canonical Dunkl transform \cite{ghazouani2017unified} by combining the linear canonical transform and the Dunkl transform. It is the 
generalization of some other integral transforms like the fractional Dunkl
transform \cite{ghazouani2014fractional}, the linear canonical Fourier Bessel transform \cite{ghazouani2023canonical}, the Hankel transform  \cite{TEC}, and the fractional Hankel transform \cite{kerr1991fractional}, etc.
The LCDT is a powerful tool for handling a broader range of mathematical operations, like scaling, rotation, shearing, and allowing for a more versatile mathematical framework. Especially, it is useful in signal analysis, image processing, etc. Signals or functions often have varying degrees of smoothness (i.e., non-smooth signals or functions), so it is necessary to require function spaces with controlled smoothness and decay properties. 
Sobolev spaces possess these properties and serve as an ideal mathematical setting for studying LCDT. Moreover, generalizes the concept of differentiability and integrability for functions, enabling the rigorous analysis of problems where classical derivatives may not exist in the usual sense. In this work, we study linear canonical Dunkl Sobolev spaces and establish fundamental results that are essential for proving the real Paley-Wiener theorem.  The linear canonical Dunkl Sobolev space is defined as for $s \in \mathbb{R}$
\begin{equation*}
{\textbf{W}^{s}_{k,M}(\mathbb{R})} = \{ h \in \mathcal{S' (\mathbb{R}}) : (1+|\lambda|^2)^{\frac{s}{2}}\,\mathcal{D}^M_k(h) \in L^2_k(\mathbb{R})\},
\end{equation*}
where $\mathcal{D}^M_k$ denotes the linear canonical Dunkl transform defined in Definition \ref{D3.1}.
We define a seminorm associated with the linear canonical Dunkl operator 
$$ S_{r,p}(\phi) = C\,\|  (1+x^2)^{\frac{r}{2}}\,\Lambda_{k, M^{-1}}^{p} \varphi \|_{L_k^2(\mathbb{R})}, \quad r,p \in \mathbb{N}^*,$$
where $\phi \in \mathcal{S}(\mathbb{R})$, $M^{-1}$ is the inverse of the matrix $M$, and $\mathbb{N}^*=\mathbb{N}\cup \{0\}$. $\Lambda_{k, M^{-1}}$ denotes the linear canonical Dunkl operator defined in Section \ref{s3} by \eqref{eq:3.1}.
Let us describe our main results:
\begin{theorem-non}[Real Paley-Wiener theorem (Theorem \ref{t:4.1})]
For all $f \in \mathcal{ S(\mathbb{R})} $, the following limit exists
\begin{equation*}
    \lim_{n \rightarrow \infty} \| \Lambda_{k, M^{-1}}^nf\|^{\frac{1}{n}}_{L_k^p(\mathbb{R})} = \sigma_f, \quad 1\le p \le \infty,
\end{equation*}
where $$\sigma_f = \text{sup} \,\left\{ \frac{|\lambda|}{b}:  \lambda \in\text{supp}\,(\mathcal{D}_k^M(f)
)\right\}, \quad b\neq 0.$$
\end{theorem-non}
\begin{theorem-non} [Real Paley-Wiener theorem for non-convex domain (Theorem \ref{t:5.1})]
 The linear canonical Dunkl transform $\mathcal{D}_k^M(f)$ of $f\in \mathcal{S}(\mathbb{R})$   vanishes outside of the polynomial domain $\Omega_{P,b}$ if and only if  
 \begin{equation*} 
 \limsup\limits_{n \to \infty} \|P^n(i\Lambda_{k, M^{-1}})f\|^{\frac{1}{n}}_{L_k^p(\mathbb{R})} \le 1, \quad \text{
 for}\quad 1\le p \le \infty.
 \end{equation*}
\end{theorem-non}
\begin{theorem-non}[Boas type theorem (Theorem \ref{t:5.6})]
Let $f\in \mathcal{S}(\mathbb{R})$ vanishes in some interval $(-r,r)$ if and only if 
\begin{equation*}
     \lim_{n \to \infty}  \Bigg \| \sum_{m=0}^\infty \frac{n^m\, \Delta^m_{k,M^{-1}}f}{m!} 
 \Bigg \|_{L_k^p(\mathbb{R})}^{\frac{1}{n}} \le e^{-r^2}, \qquad \text{for}\, \,\,\,1\le p\le \infty.
\end{equation*}
\end{theorem-non}
The structure of the paper is as follows: Section \ref{S2} introduces the notations and recalls the definitions of the Dunkl transform and Dunkl operator, along with their fundamental results. Section \ref{s3} explores the properties of the linear canonical Dunkl transform and the linear canonical Dunkl operator. Subsequently, in Section \ref{S4}, we define the Sobolev space associated with the linear canonical Dunkl transform and examine some of its properties. In Section \ref{S3}, we establish the real Paley-Wiener theorem for LCDT.  Section \ref{S5} presents the real Paley-Wiener theorem for the Schwartz functions whose linear canonical Dunkl transform is supported within the polynomial domain, and then we establish the Boas type Paley-Wiener theorem for the linear canonical Dunkl transform.

\section{Preliminaries} \label{S2}
In this section, we begin by recalling the definitions of the Dunkl transform and the Dunkl operator. Additionally, we discuss some of their fundamental properties.
\\
$\text{\textbf{Notations:}}$
\begin{itemize}
\item $\mathcal{C}^n (\mathbb{R})$  the space of all $n$ times continuously differentiable  functions  on $\mathbb{R}.$
\vspace{.2cm}
\item $\mathcal{S}(\mathbb{R})$  the space of all infinitely differentiable functions on $\mathbb{R}$ which are rapidly decreasing with their derivatives.
\vspace{.2cm}
\item $\mathcal{C}_0 (\mathbb{R})$ the space of continuous functions on $\mathbb{R}$ which vanishes at infinity.
\vspace{.2cm}
 \item $\mathcal{S'(\mathbb{R})}$ the space of all tempered distributions  on $\mathbb{R}.$
 \vspace{.2cm}
 \item For $1\le p<\infty$, the Lebesgue space $L_k^p(\mathbb{R})$ is  the collection of all measurable functions $f$ on $\mathbb{R}$ such that
\begin{equation*}
\|f\|_{L_k^p(\mathbb{R})} = \left( \int_{\mathbb{R}} |f(x)|^p\, d\mu_k(x)\right)^{\frac{1}{p}}< \infty,
\end{equation*}
where 
\begin{equation*}
    d\mu_k(x) = \frac{
 |x|^{2k+1}\, dx}{2^{k+1}\,\Gamma(k+1)}, \qquad  k\ge -\frac{1}{2}.
\end{equation*}
\vspace{.2cm}
\item 
For $p=\infty$, 
\begin{equation*}
\|f\|_{L^\infty_k(\mathbb{R})} = \mathop{\underset{x \in \mathbb{R}}{\text{ess. sup}}} |f(x)| < \infty.
\end{equation*}
\vspace{.2cm}
\item  We denote by $SL(2,\mathbb{R})$ the set of real unimodular matrices of order $2\times 2$. For notational convenience, we shall write a $2\times 2$ matrix as $M:=(a,b;c,d)$. 
\item  We write $X \lesssim Y$ to indicate that $X \le C Y$, where $C$  is a positive constant that varies depending on the context of the inequality.
\end{itemize}
\subsection{Dunkl operator}
In this section, we recall the Dunkl operator and its fundamental properties, providing a foundation for subsequent analysis of the linear canonical Dunkl operator. \\
C.F. Dunkl introduced the differential-difference operator (Dunkl operator), by associating reflection group and multiplicity function. We are interested in discussing the Dunkl operator on the real line together with the reflection group $\mathbb{Z}_2$. The Dunkl operator defined for a continuously differentiable function on $\mathbb{R}$ as \cite{dunkl1989differential}:
 \begin{equation*}
\Lambda_{k}f(x) = \frac{d}{dx}f(x)+\frac{2k+1}{x}\left[ \frac{f(x)-f(-x)}{2}\right], \quad\quad f\in \mathcal{C}^1(\mathbb{R}).  
\end{equation*}
The second part of the above equation makes sense to call $\Lambda_k$ as the differential-difference operator, which generalizes the classical differential operator by incorporating reflection symmetry. The study of Dunkl operators has experienced significant growth over the past three decades, driven by their importance in various areas of mathematics and physics. Moreover, they provide a framework for developing an analogue of harmonic analysis theory depending on a set of real parameters. A key application of Dunkl operators lies in the analysis of orthogonality structures for polynomials. They have also been instrumental in the development of new classes of polynomials, such as the Euler-Dunkl polynomials \cite{A.J.} and Bernoulli-Dunkl polynomials \cite{J.I.}, among others. More recently, Alejandro et al. investigated the properties of these polynomials and introduced Appell sequences and Hurwitz zeta functions within the Dunkl framework \cite{A.G.}.  In addition, numerous studies have been conducted in this area, further expanding the theory and applications of Dunkl operators \cite{rosler1999uncertainty, Trime'Che}.
\\
The initial value problem is formulated in terms of the Dunkl operator, as given below;
\begin{equation*}
 \left \{
 \begin{array}{ll}
 \Lambda_{k} f(x) = i\lambda f(x),    & 
 \quad\lambda \in \mathbb{R}\mbox{}  \\
 f(0) = 1. & 
 \end{array}
 \right.
\end{equation*}
The function $E_k(i\lambda,x)$ admits a unique solution to the above problem. Observe that for $k=-1/2$ the operator $\Lambda_k$ reduces to the classical differential operator $\Lambda_{-\frac{1}{2}}=\frac{d}{dx}$, and  $E_k(-i\lambda,x)$ simplifies to the kernel of the Fourier transform,  $e^{-i\lambda x}$.\\\\
Now, with the function $E_k(i\lambda,x)$ at hand, we recall the definition of the Dunkl transform. For $f \in  L^{1}_{k}(\mathbb{R}) $, then the Dunkl transform $\mathcal{D}_k$ is defined as \cite{de1993dunkl}
\begin{equation*}
\mathcal{D}_k(f)(\lambda) =  \int_{\mathbb{R}} f(x)\,E_k(-i\lambda,  x)\,  d \mu_k(x),  
\end{equation*}
where $ k\ge -\frac{1}{2},~~~E_k(i\lambda,x)$ denotes the Dunkl kernel   defined as \cite{rosler1999uncertainty}
 \begin{equation*}
  E_k(i\lambda,x) = j_{k}(\lambda x)+\frac{i\lambda x}{2(k+1)}j_{k+1}(\lambda x),\end{equation*}
 and $j_{k}$  denotes the normalized spherical Bessel function \cite{rosler2000one, watson1922treatise}
 \begin{equation*}
  j_{k}(x) = 2^{k} \Gamma(k+1)x^{-k}J_{k}(x) = \Gamma(k +1)\sum^{\infty}_{n=0}\frac{(-1)^n(x/2)^{2n}}{n!\Gamma(n+k+1)}.
  \end{equation*}
Over time, researchers have developed the theory of the Dunkl transform in multiple directions, such as the wavelet transform \cite{Prasad}, uncertainty principles \cite{Sraieb}, localization operators \cite{Mejjaoli}, etc.
Some essential properties of the Dunkl operator and their relation to the Dunkl transform are listed below \cite{Trime'Che}:
\begin{proposition}
\begin{itemize}
\item [$(i)$]Let $f,g \in \mathcal{ S(\mathbb{R})}$. Then 
  \begin{equation*}
   \int_{\mathbb{R}} \Lambda_k f(x) \,\overline{g(x) } \, d\mu_k(x )= -\int_{\mathbb{R}} f(x)\, \overline{\Lambda_k g(x)}\,  d\mu_k(x).
\end{equation*} \item[$(ii)$]  For all $f \in \mathcal{ S(\mathbb{R})}$ and $n \in \mathbb{N}$, we have
\begin{equation} \label{e3.1}
\mathcal{D}_k(\Lambda^n_k f)(\lambda) = (-i\lambda)^n\,\mathcal{D}_k(f)(\lambda).
\end{equation}
\end{itemize}
\end{proposition}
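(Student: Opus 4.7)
The strategy is to establish (i) directly by integration by parts and to deduce (ii) from (i) together with the eigenfunction equation for $E_k$, using induction on $n$.

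For part (i), decompose the operator as $\Lambda_k f(x) = f'(x) + \frac{2k+1}{2x}[f(x)-f(-x)]$ and handle each summand separately. On the derivative piece $\int_{\mathbb{R}} f'(x)\overline{g(x)}\,d\mu_k(x)$, classical integration by parts against the measure $d\mu_k$ yields $-\int_{\mathbb{R}} f(x)\overline{g'(x)}\,d\mu_k(x)$ together with an extra contribution produced by $\tfrac{d}{dx}|x|^{2k+1}=(2k+1)\,\mathrm{sgn}(x)\,|x|^{2k}$; this residual term is a scalar multiple of $\int_{\mathbb{R}} \frac{f(x)\overline{g(x)}}{x}\,d\mu_k(x)$. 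Boundary terms at $\pm\infty$ vanish by Schwartz decay, and for $k>-1/2$ the weight $|x|^{2k+1}$ vanishes at the origin so no boundary term appears there (the endpoint $k=-1/2$ simply collapses to the classical formula). On the difference piece, the substitution $x\mapsto -x$ in the summand containing $f(-x)$ produces a linear combination of $\overline{g(x)}$ and $\overline{g(-x)}$; when this is added to the residual term above, the total reassembles precisely into $-\int_{\mathbb{R}} f(x)\overline{\Lambda_k g(x)}\,d\mu_k(x)$, which is (i).

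For part (ii), induction on $n$ is the cleanest route. The case $n=0$ is immediate. For $n=1$, take $g(x)=E_k(i\lambda,x)$ so that $\overline{g(x)}=E_k(-i\lambda,x)$; this identity follows from the reality and evenness of $j_k$ and $j_{k+1}$ on the real line, which are visible from the power series for $j_k$. Applying (i) with this choice of $g$, the eigenfunction identity $\Lambda_k E_k(i\lambda,\cdot)(x)=i\lambda E_k(i\lambda,x)$ converts the right-hand side into a scalar multiple of $\mathcal{D}_k(f)(\lambda)$, yielding the claimed factor. For the inductive step, observe that $\mathcal{S}(\mathbb{R})$ is preserved by $\Lambda_k$, so the base case can be reapplied to $\Lambda_k^{n-1}f\in\mathcal{S}(\mathbb{R})$ and the powers of $-i\lambda$ accumulate.

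I expect the main obstacle to lie in the bookkeeping for part (i): one must track signs carefully through the $x\mapsto -x$ substitution and the differentiation of the weight, and verify that the residual term from the derivative piece and the rewritten difference piece cancel in exactly the right way. A useful organizing device is the even-odd decomposition $f=f_e+f_o$: under this splitting $\Lambda_k$ acts diagonally up to parity exchange, and $\int_{\mathbb{R}} fg\,d\mu_k$ vanishes whenever $f$ and $g$ have opposite parities, which collapses much of the algebra and makes the cancellation transparent.
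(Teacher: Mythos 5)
The paper does not prove this proposition at all --- it is quoted from the literature (Trim\'eche) as background, so there is no in-paper argument to compare against. Your proposal is the standard proof and its structure is sound: integration by parts against the weight $|x|^{2k+1}$ produces the residual term $-(2k+1)\int x^{-1}f\overline{g}\,d\mu_k$, the $x\mapsto -x$ substitution in the difference part produces $\tfrac{2k+1}{2}\int x^{-1}f(x)\bigl(\overline{g(x)}+\overline{g(-x)}\bigr)\,d\mu_k$, and the sum reassembles into $-\int f\,\overline{\Lambda_k g}\,d\mu_k$; the boundary and integrability remarks at $0$ and $\pm\infty$ are the right ones, and the even--odd decomposition is a sensible organizing device. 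One concrete issue in part (ii): you assert that applying (i) with $g=E_k(i\lambda,\cdot)$ ``yields the claimed factor,'' but if you actually track the signs you get
\begin{equation*}
\mathcal{D}_k(\Lambda_k f)(\lambda)=-\int_{\mathbb{R}} f(x)\,\overline{i\lambda\,E_k(i\lambda,x)}\,d\mu_k(x)=-(-i\lambda)\,\mathcal{D}_k(f)(\lambda)=i\lambda\,\mathcal{D}_k(f)(\lambda),
\end{equation*}
i.e.\ the factor $(i\lambda)^n$, not $(-i\lambda)^n$ (consistent with the classical case $k=-\tfrac12$, where $\widehat{f'}(\lambda)=i\lambda\hat f(\lambda)$ for the kernel $e^{-i\lambda x}$). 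The sign $(-i\lambda)^n$ in the statement is compatible only with the opposite kernel convention $E_k(i\lambda,x)$; as the paper defines $\mathcal{D}_k$ with kernel $E_k(-i\lambda,x)$ and $\Lambda_k E_k(i\lambda,\cdot)=i\lambda E_k(i\lambda,\cdot)$, the stated sign appears to be an error in the source rather than in your method --- but you should not claim the computation produces a factor you have not verified. The induction step and the base case $n=0$ are otherwise fine.
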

We recall the following proposition, which is essential for establishing the same result for the linear canonical Dunkl operator.
\begin{proposition} \cite[Proposition 2.1]{Chettaoui}\label{p:3.4} Let $f\in \mathcal{C}^\infty(\mathbb{R})$. Then
 \begin{itemize}
     \item [$(i)$] For all $n \in \mathbb{N}$, the Dunkl operator $ \Lambda_k^n f\in \mathcal{C}^\infty(\mathbb{R})$.\\
     \item[$(ii)$] For every $m\in \mathbb{N}^*$ and $R>0$, there exist a constant $C_m$ such that for all $x\in \left[ -R,R\right]$ and some $\xi_j(x,m),$\quad $j=0,1, \cdots m$,  satisfying 
 \begin{equation*}
 | \Lambda_k^mf(x)| \le |f^{(m)}(x)| +C_m\, \sum_{j=1}^{m} |f^{(m)}(\xi_j)|.    
\end{equation*}
\item[$(iii)$] For all $m\in \mathbb{N}^*$, and $R>0$ there exist  $C_m^0>0$ such that
\begin{equation*}
| \Lambda_k^mf(x)| \le |f^{(m)}(x)| +C_m^0\, \sum_{j=0}^{m-1} |f^{(j)}(x)|+|f^{(j)}(-x)|, \,\, \forall x \in \mathbb{R}\quad \text{with}\quad |x|>R.
\end{equation*}
 \end{itemize}   
\end{proposition}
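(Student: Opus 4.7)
I would begin with the key computation — applying Hadamard's lemma to the odd part of $f$ gives
\begin{equation*}
\frac{f(x)-f(-x)}{2x} \;=\; \int_{0}^{1}\frac{f'(xt)+f'(-xt)}{2}\,dt, \qquad f\in\mathcal{C}^\infty(\mathbb{R}),
\end{equation*}
so the Dunkl operator admits the manifestly smooth representation
\begin{equation*}
\Lambda_k f(x) \;=\; f'(x) + \frac{2k+1}{2}\int_{-1}^{1} f'(xs)\,ds.
\end{equation*}
Differentiating under the integral sign shows $\Lambda_k$ maps $\mathcal{C}^\infty(\mathbb{R})$ into itself, and a straightforward induction on $n$ then yields (i).

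For (ii), I would prove by induction on $m$ the stronger integral representation
\begin{equation*}
\Lambda_k^m f(x) \;=\; f^{(m)}(x) + \int_{-1}^{1} W_m(s)\, f^{(m)}(xs)\, ds,
\end{equation*}
where $W_m\in L^1([-1,1])$ is a weight depending only on $m$ and $k$. The base case $m=1$ follows from the Hadamard rewriting above with $W_1\equiv(2k+1)/2$. In the inductive step, one differentiates the representation for $\Lambda_k^m f$ in $x$ (bringing an extra factor of $s$ inside the integral), then applies $\Lambda_k$ again via its Hadamard form; after a change of variables the result takes the same shape with a new $L^1$-weight $W_{m+1}$. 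With this in hand, for $x\in[-R,R]$ one has $xs\in[-|x|,|x|]\subseteq[-R,R]$, and the integral MVT, applied separately to the positive and negative parts of $W_m$, contracts $\int_{-1}^{1}W_m(s)f^{(m)}(xs)\,ds$ into a bounded combination of $f^{(m)}(\xi_j)$ for finitely many $\xi_j\in[-|x|,|x|]\subseteq[-R,R]$, giving the desired bound with $C_m$ depending only on $m$ and $k$. The main obstacle is the book-keeping in the inductive step — verifying that the recursion $W_m\mapsto W_{m+1}$ preserves integrability; the clean cancellation $(\Lambda_k^m f)'(xt)+(\Lambda_k^m f)'(-xt)$ inside the Hadamard integral (which I would verify by direct computation, as already happens at $m=1,2$) is what keeps the weights $L^1$-bounded.

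For (iii), on $\{|x|>R\}$ the factor $1/|x|$ is dominated by $1/R$, so the triangle inequality applied directly to the definition of $\Lambda_k$ gives
\begin{equation*}
|\Lambda_k f(x)| \;\le\; |f'(x)|+\frac{2k+1}{2R}\bigl(|f(x)|+|f(-x)|\bigr).
\end{equation*}
Iterating this $m$ times and organizing terms by differentiation order produces the claimed bound $|\Lambda_k^m f(x)|\le|f^{(m)}(x)|+C_m^0\sum_{j=0}^{m-1}\bigl(|f^{(j)}(x)|+|f^{(j)}(-x)|\bigr)$, with $C_m^0$ depending on $m$, $k$ and $R$; the parity alternation of $\Lambda_k$ (even functions to odd, odd to even) ensures that only values at $\pm x$ appear at each iteration, so no additional evaluation points enter.
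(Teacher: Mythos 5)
The paper does not prove this proposition at all: it is imported verbatim from Chettaoui--Trim\'eche \cite[Proposition 2.1]{Chettaoui}, so there is no in-house argument to compare against. Judged on its own, your plan is essentially correct. The Hadamard rewriting $\Lambda_k f(x)=f'(x)+\tfrac{2k+1}{2}\int_{-1}^{1}f'(xs)\,ds$ is exact and immediately gives (i); the inductive representation $\Lambda_k^m f(x)=f^{(m)}(x)+\int_{-1}^{1}W_m(s)f^{(m)}(xs)\,ds$ is also correct (each application of $\Lambda_k$ in Hadamard form raises the derivative order by exactly one and leaves no lower-order terms, and the pushforward of $sW_m(s)\,ds\,dt$ under $(s,t)\mapsto st$ is an $L^1$ density on $[-1,1]$, possibly with a logarithmic singularity at $0$ but integrable), which yields (ii) with $\xi_j\in[-|x|,|x|]$. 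Two small repairs: the first mean value theorem for integrals needs a real-valued integrand, so for complex $f$ either split into real and imaginary parts or simply take $\xi$ to be a maximizer of $|f^{(m)}|$ on $[-|x|,|x|]$ and bound the integral by $\|W_m\|_{L^1}\,|f^{(m)}(\xi)|$; either route delivers the stated estimate.

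The one place where the write-up is genuinely underspecified is (iii). "Iterating" the one-step bound gives $|\Lambda_k^m f(x)|\le |(\Lambda_k^{m-1}f)'(x)|+\tfrac{2k+1}{2R}\bigl(|\Lambda_k^{m-1}f(x)|+|\Lambda_k^{m-1}f(-x)|\bigr)$, and the term $(\Lambda_k^{m-1}f)'$ is not controlled by the inductive inequality, since $\tfrac{d}{dx}$ and $\Lambda_k$ do not commute. The fix is to run the induction on the exact identity
\begin{equation*}
\Lambda_k^m f(x)=f^{(m)}(x)+\sum_{j=0}^{m-1}\bigl(a_{m,j}(x)f^{(j)}(x)+b_{m,j}(x)f^{(j)}(-x)\bigr),
\end{equation*}
where the $a_{m,j},b_{m,j}$ are polynomials in $1/x$ without constant term; this class is stable under both differentiation and multiplication by $(2k+1)/(2x)$, the coefficient of $f^{(m+1)}(x)$ stays equal to $1$, and all coefficients are bounded on $\{|x|>R\}$, which gives (iii). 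This is presumably what you mean by "organizing terms by differentiation order," but it should be stated as the inductive hypothesis; the inequality alone is not a strong enough induction statement.
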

\section{Linear canonical Dunkl operator}\label{s3}
We introduced the linear canonical Dunkl operator by incorporating the Dunkl operator $\Lambda_k$ and matrix parameter $M$, denoted as $\Lambda_{k, M}$. For $f\in \mathcal{C}^1(\mathbb{R})$, then the linear canonical Dunkl operator is defined as \cite{USK}
\begin{equation} \label{eq:3.1}
\Lambda_{k, M}(f)(x) = \Lambda_kf(x)-i\left(\frac{d}{b}\right) x\, f(x).   
\end{equation}
 The linear canonical Dunkl Laplacian is defined in the usual way as $ \Delta_{k, M} = \Lambda^2_{k, M}$. By selecting appropriate matrix parameters, 
$\Lambda_{k, M}$ recovers several well-known operators. Specifically, for
\begin{enumerate}[$(i)$]
    \item  $M = (
 \cos\theta, -\sin\theta;
 \sin\theta, \cos\theta),\, \theta \in \mathbb{R}, \theta\neq n\pi$, 
 $\Lambda_{k, M}$  reduces to the fractional Dunkl operator \cite{ghazouani2014fractional} 
 \item $M= (0,-1;1,0)$ reduces to Dunkl operator \cite{dunkl1989differential}.
\end{enumerate}
Thus, the linear canonical Dunkl operator encompasses various differential operators within a unified structure. Due to the versatile nature of $\Lambda_{k, M}$, it may offer a more comprehensive framework for studying the symmetry structures of various spaces compared to the  Dunkl operator. \\
We set the following eigenvalue problem by exploiting the linear canonical Dunkl operator: For $\lambda \in \mathbb{R}$
\begin{equation*}
 \left \{
 \begin{array}{ll}
 \Lambda_{k, M} f(x) =-i\left(\frac{\lambda}{b}\right) f(x),     & \mbox{}  \\
 f(0) = e^{\frac{i}{2} \frac{d}{b} \lambda^2}, & 
 \end{array}
 \right.
\end{equation*}
has a unique solution $x\mapsto E^M_{k}(\lambda,x).$
Now, we are in the position to recall the definition of the linear canonical Dunkl transform.  
\begin{definition}\label{D3.1}
Let $f \in L_k^1(\mathbb{R})$. We define the linear canonical Dunkl transform (LCDT) as follows \cite{ghazouani2017unified, USK}:
\begin{equation*}
\mathcal{D}_k^M(f)(\lambda) = \frac{1}{(ib)^{k+1}} \int_{\mathbb{R}}f(x)E^M_{k}(\lambda,x)d\mu_{k}(x),  \quad b \neq 0,
\end{equation*}
and the kernel $E^M_{k}(\lambda,x) $ is given by
\begin{equation*}
E^M_{k}(\lambda,x) = e^{\frac{i}{2}(\frac{d}{b}\lambda^2+\frac{a}{b}x^2)}E_{k}(-i\lambda/b,x).
\end{equation*} 
\end{definition}
We listed some basic properties of the linear canonical Dunkl transform, and most of the results were taken from \cite{ghazouani2017unified, USK}.
\begin{proposition}
\begin{enumerate}[$(i)$]
\item 
\textbf{Plancherel's formula:} If $f \in  L^{1}_{k}(\mathbb{R})\cap L^{2}_{k}(\mathbb{R}) $, then  $\mathcal{D}^M_k(f) \in L^{2}_{k}(\mathbb{R})$ and  
 \begin{equation}
 \|\mathcal{D}^M_k(f)\|_{ L^{2}_{k}(\mathbb{R})} = \| f \|_{ L^{2}_{k}(\mathbb{R})}.\label{eq :2.1}
 \end{equation}
 \vspace{.2cm}
\item \textbf{Parsavel's formula:} For $f,g \in L_k^2(\mathbb{R})$, we have
\begin{equation}
 \int_{\mathbb{R}}f\label{eq3.3}(x)\,\overline{g(x)}\,d\mu_k(x) = \int_{\mathbb{R}} \mathcal{D}_k^M(f)(\lambda)\,\overline{\mathcal{D}_k^M(g)(\lambda)}\,d\mu_k(\lambda).   
\end{equation}
\vspace{.2cm}
\item \textbf{Inverse formula:}  For every $ f\in L^1_{k}(\mathbb{R})$ such that $\mathcal{D}^M_{k}f \in L^1_{k}(\mathbb{R})$, then   we have
\begin{equation} \label{e:2.2}
\mathcal{D}^{M^{-1}}_{k} (\mathcal{D}^M_{k}f)
=f ,\,\, \text{a.e}.    
\end{equation}
\vspace{.2cm}
 \item\textbf{Hausdorff-Young's inequality:} For $ 1 \le p \le 2$, the operator $\mathcal{D}_k^M$ is a bounded linear operator on $L_k^p(\mathbb{R})$, and it satisfies the following inequality:   \begin{equation}\label{eq:2.3}
    \|\mathcal{D}_k^M(f)\|_{L_k^{q}(\mathbb{R})} \le \frac{1}{\left(|b|^{k+1}\right)^{1-\frac{2}{q}}}\, \|f\|_{L^p_k(\mathbb{R})},
\end{equation}
where $q$ is the conjugate exponent of $p$.\\
\vspace{.2cm}
\item \textbf{Reimann-Lebesgue lemma:}
    For every $f\in L_k^1(\mathbb{R}),$ we have $\mathcal{D}_k^M(f)\in \mathcal{C}_0(\mathbb{R})$ and
    \begin{align}\label{e2.3}
    \|\mathcal{D}_k^M(f)\|_{L_k^{\infty}(\mathbb{R})} \leq \frac{c_k}{|b|^{k+1}} \|f\|_{L_k^1(\mathbb{R})}.
    \end{align}
\item The map $\mathcal{D}_k^M:\mathcal{S}(\mathbb{R}) \rightarrow \mathcal{S}(\mathbb{R})$ is a topological isomorphism.
\end{enumerate}
\end{proposition}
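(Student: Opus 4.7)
The proof plan rests on a single reduction: for any $f \in L^1_k(\mathbb{R})$, writing the chirp multiplier $\chi_M(x) = e^{\frac{i}{2}\frac{a}{b}x^2}$, the definition of the kernel gives the identity
\begin{equation*}
\mathcal{D}_k^M(f)(\lambda) = \frac{e^{\frac{i}{2}\frac{d}{b}\lambda^2}}{(ib)^{k+1}}\,\mathcal{D}_k\!\bigl(\chi_M f\bigr)\!\bigl(\lambda/b\bigr).
\end{equation*}
Since $|\chi_M(x)| = 1$ and $|e^{\frac{i}{2}\frac{d}{b}\lambda^2}| = 1$, this factorization transports the LCDT to the ordinary Dunkl transform up to a unimodular phase and a dilation by $1/b$. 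All six items in the proposition are then obtained by importing the corresponding property of $\mathcal{D}_k$.

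For \textbf{(i)} I would apply the dilation change of variables $\mu = \lambda/b$, under which $d\mu_k(\lambda) = |b|^{2k+2}\,d\mu_k(\mu)$, then discard the two unit-modulus phases and use the Plancherel formula already known for $\mathcal{D}_k$; the factor $|b|^{2k+2}$ exactly balances $|(ib)^{k+1}|^{-2}$, giving $\|\mathcal{D}_k^M f\|_{L^2_k} = \|\chi_M f\|_{L^2_k} = \|f\|_{L^2_k}$. \textbf{(ii)} follows from (i) by the standard polarization identity. For \textbf{(iii)}, applying $\mathcal{D}_k^{M^{-1}}$ to $\mathcal{D}_k^M f$ via the same factorization reduces the composition to $\mathcal{D}_k^{-1}\mathcal{D}_k$ of $\chi_M f$ after the phases cancel (observing that the $(a,b,c,d)$ entries of $M^{-1}$ swap $a \leftrightarrow d$ and negate $b$, which produces the conjugate phases), recovering $\chi_M f / \chi_M = f$ almost everywhere. \textbf{(v)} follows because $\chi_M f \in L^1_k(\mathbb{R})$ whenever $f \in L^1_k(\mathbb{R})$ with the same norm, so the Dunkl Riemann--Lebesgue lemma delivers $\mathcal{D}_k(\chi_M f) \in \mathcal{C}_0$; composing with a linear change of variable and a unimodular multiplier keeps us in $\mathcal{C}_0$, and the $L^\infty$ bound \eqref{e2.3} is a direct transcription.

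For \textbf{(iv)}, the endpoint $p=1$, $q=\infty$ is the Riemann--Lebesgue bound \eqref{e2.3}, while the endpoint $p=q=2$ is Plancherel \eqref{eq :2.1}. Riesz--Thorin interpolation between these two bounds yields \eqref{eq:2.3}, with the correct constant $|b|^{-(k+1)(1-2/q)}$ read off from the interpolation of the endpoint constants $c_k/|b|^{k+1}$ and $1$.

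The main work is \textbf{(vi)}. I would show $\mathcal{D}_k^M:\mathcal{S}(\mathbb{R})\to\mathcal{S}(\mathbb{R})$ is continuous by establishing intertwining relations: from the definition of $\Lambda_{k,M}$ in \eqref{eq:3.1} and the Dunkl-transform identity \eqref{e3.1}, a calculation analogous to the Fourier case gives an identity of the form $\mathcal{D}_k^M(\Lambda_{k,M^{-1}} f)(\lambda) = -i(\lambda/b)\,\mathcal{D}_k^M(f)(\lambda)$, together with a companion identity in which multiplication by $x$ on the spatial side corresponds (up to constants in $M$) to $\Lambda_{k,M^{-1}}$ applied on the transform side. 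Iterating these identities bounds each Schwartz seminorm $S_{r,p}(\mathcal{D}_k^M f)$ by a finite sum of seminorms of $f$, so $\mathcal{D}_k^M$ is continuous on $\mathcal{S}(\mathbb{R})$. The inverse is $\mathcal{D}_k^{M^{-1}}$ by (iii), and by the same argument applied with $M^{-1}$ in place of $M$ it is also continuous, producing the topological isomorphism. The delicate point here is checking that $\chi_M f$ stays in $\mathcal{S}(\mathbb{R})$ with continuous dependence: derivatives of $\chi_M$ produce polynomial factors in $x$, which are absorbed by the rapid decay of $f$, and the resulting seminorm estimates are polynomial in the entries of $M$; this is the step I expect to require the most bookkeeping, but it is essentially routine once the intertwining relations are in hand.
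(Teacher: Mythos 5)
Your proposal is correct, and it is essentially the approach the paper itself relies on: the paper states this proposition without proof (citing the references for the LCDT), but your central reduction $\mathcal{D}_k^M(f)(\lambda) = \tfrac{e^{\frac{i}{2}\frac{d}{b}\lambda^2}}{(ib)^{k+1}}\,\mathcal{D}_k(e^{\frac{i}{2}\frac{a}{b}x^2}f)(\lambda/b)$ is precisely the identity the paper records in Remark \ref{r:3.1}$(i)$, and importing each property of $\mathcal{D}_k$ through the unimodular chirps and the dilation $\lambda\mapsto\lambda/b$ (with $d\mu_k(\lambda)=|b|^{2k+2}d\mu_k(\mu)$) is the intended argument. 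The only nit is in $(iv)$: Riesz--Thorin from your stated endpoints gives the constant $c_k^{1-2/q}|b|^{-(k+1)(1-2/q)}$ rather than $|b|^{-(k+1)(1-2/q)}$, which matches the displayed inequality only because the Dunkl kernel satisfies $|E_k(-i\lambda,x)|\le 1$, so the endpoint constant at $p=1$ is in fact $|b|^{-(k+1)}$; this is worth making explicit but is not a gap.
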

The linear canonical Dunkl operator  $\Lambda_k^M$  shares many properties with the Dunkl transform.
\begin{proposition}
\begin{itemize}
\item [$(i)$]Let $f,g \in \mathcal{ S(\mathbb{R})}$. Then  we have 
\begin{equation}\label{eq:2.4}
    \int_{\mathbb{R}} \Lambda^M_k f(x)\, \overline{ g(x)}\,  d\mu_k(x) = -\int_{\mathbb{R}} f(x)\,\overline{\Lambda^M_k
    g(x)}\,  d\mu_k(x).
\end{equation}

\item [$(ii)$] For $f\in \mathcal{ S(\mathbb{R})} $, we have 
\begin{equation} \label{e:2.3}
\mathcal{D}_k^M \left( \Lambda_{k, M^{-1}}^nf\right)(\lambda) = \left( \frac{i\lambda}{b}\right)^n \, \mathcal{D}_k^M(f)(\lambda),
\end{equation}
for all $n \in \mathbb{N}^*$.
\item[$(iii)$] For $f \in \mathcal{S}(\mathbb{R})$ and fixed $n \in \mathbb{N}$, we have 
\begin{equation} \label{e:3.6}
    \mathcal{D}_k^M \left( \sum_{m=0}^\infty \frac{n^m\, \Delta^m_{k,M^{-1}}f}{m!} \right) (\lambda) = e^{-n\lambda^2/b^2}\, \mathcal{D}_k^Mf(\lambda).
\end{equation}
\item[$(iv)$] The space $\mathcal{S}(\mathbb{R})$ is invariant under the linear canonical Dunkl operator $\Lambda_{k,M}$.
\end{itemize}
\end{proposition}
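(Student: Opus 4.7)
The four parts have quite different flavours. Parts (i) and (iv) are short direct computations, (ii) is the main intertwining relation and the heart of the proposition, and (iii) follows formally from (ii) once convergence is checked. For (i), I would substitute the definition $\Lambda_{k,M}=\Lambda_k-i(d/b)x\cdot$ and split the left-hand integral into two pieces. On the $\Lambda_k$-piece I would invoke the skew-symmetry of $\Lambda_k$ recalled in Section \ref{S2}; on the multiplication piece I would use that multiplication by the real function $(d/b)x$ is self-adjoint, so that the factor $-i$ conjugates to $+i$ and reassembles $-\overline{\Lambda_{k,M}g}$ on the right. Part (iv) is equally quick: $\mathcal{S}(\mathbb{R})$ is invariant under $\Lambda_k$ (standard Dunkl fact) and under multiplication by polynomials, so $\Lambda_{k,M}=\Lambda_k-i(d/b)x\cdot$ preserves $\mathcal{S}(\mathbb{R})$ term by term.

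For the core claim (ii), my plan is to reduce to the known Dunkl-transform intertwining \eqref{e3.1}. First I would rewrite the LCDT as a modulated Dunkl transform: with $M_a f(x):=e^{iax^2/(2b)}f(x)$ one has
\begin{equation*}
\mathcal{D}_k^M(f)(\lambda)=\frac{e^{id\lambda^2/(2b)}}{(ib)^{k+1}}\,\mathcal{D}_k\!\left(M_a f\right)\!\left(\lambda/b\right).
\end{equation*}
Next I would establish the key conjugation identity $\Lambda_k\circ M_a=M_a\circ\Lambda_{k,M^{-1}}$. The point is that the Gaussian phase $e^{iax^2/(2b)}$ is an even function, so $\Lambda_k$ obeys the Leibniz-type rule $\Lambda_k(gh)=g'h+g\,\Lambda_k h$ whenever $g$ is even; using $M^{-1}=(d,-b;-c,a)$, the derivative of the phase contributes exactly the multiplication term that converts $\Lambda_k$ into $\Lambda_{k,M^{-1}}$. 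Iterating produces $\Lambda_k^n\circ M_a=M_a\circ\Lambda_{k,M^{-1}}^n$, and applying \eqref{e3.1} at the argument $\lambda/b$ then yields (ii). The step I expect to be the main obstacle is verifying the Leibniz identity with the correct signs, since the correspondence hinges on the parity of the Gaussian phase and on the precise coefficients of $M^{-1}$; any slip there propagates through the whole chain.

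For (iii), since $\Delta_{k,M^{-1}}=\Lambda_{k,M^{-1}}^2$, iterating (ii) gives $\mathcal{D}_k^M(\Delta_{k,M^{-1}}^m f)(\lambda)=(-\lambda^2/b^2)^m\,\mathcal{D}_k^M(f)(\lambda)$, and summing $\sum_{m\ge 0}(n^m/m!)(-\lambda^2/b^2)^m$ reproduces $e^{-n\lambda^2/b^2}$. All that remains is to justify exchanging $\mathcal{D}_k^M$ with the infinite sum. For $f\in\mathcal{S}(\mathbb{R})$, part (iv) keeps each $\Delta_{k,M^{-1}}^m f$ in the Schwartz class; using the pointwise bounds on $\Lambda_k^m$ from Proposition \ref{p:3.4} together with standard estimates on the derivatives of $M_a f$, I would bound the partial sums in $L_k^1(\mathbb{R})$, observe that the factor $n^m/m!$ dominates so that dominated convergence applies, and then use continuity of $\mathcal{D}_k^M$ on $L_k^1(\mathbb{R})$ via the Riemann--Lebesgue bound \eqref{e2.3} to swap the transform past the limit. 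This convergence argument is the only non-trivial point in (iii).
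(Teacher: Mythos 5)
Your treatment of (i), (ii) and (iv) is sound and is essentially the argument the paper leaves implicit (its proof is only the sentence ``follows directly from the definitions''): the conjugation identity $\Lambda_k\circ M_a=M_a\circ\Lambda_{k,M^{-1}}$ is exactly Remark \ref{r:3.1}(ii), and reducing the LCDT to a modulated Dunkl transform is Remark \ref{r:3.1}(i), so your chain for (ii) is the intended one. One caveat on signs: if you apply \eqref{e3.1} literally as printed you land on $(-i\lambda/b)^n$ rather than the claimed $(i\lambda/b)^n$; the discrepancy sits in the sign convention of \eqref{e3.1} itself, and the cleanest way to get the sign asserted in (ii) is to integrate against the kernel directly, using the eigenvalue equation $\Lambda_{k,M}E^M_k(\lambda,\cdot)=-i(\lambda/b)E^M_k(\lambda,\cdot)$ together with the skew-symmetry in part (i). This is a bookkeeping point, not a gap in the method.

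The genuine problem is your convergence argument for (iii). You claim that ``the factor $n^m/m!$ dominates so that dominated convergence applies,'' but it does not: by Plancherel and (ii), $\|\Delta^m_{k,M^{-1}}f\|_{L_k^2(\mathbb{R})}=\|(\lambda/b)^{2m}\mathcal{D}_k^M(f)\|_{L_k^2(\mathbb{R})}$, and for a generic Schwartz function (say with $\mathcal{D}_k^M(f)$ comparable to a Gaussian) this grows like $c^m\sqrt{(2m)!}\sim C^m m!$. The factorial in the denominator is therefore cancelled, the general term $n^m\|\Delta^m_{k,M^{-1}}f\|/m!$ behaves like $(Cn)^m$ and does not even tend to zero, so the series $\sum_m n^m\Delta^m_{k,M^{-1}}f/m!$ need not converge in $L_k^1(\mathbb{R})$ or $L_k^2(\mathbb{R})$ for arbitrary $f\in\mathcal{S}(\mathbb{R})$. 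The identity \eqref{e:3.6} is only legitimate either term by term (as a formal statement) or under the additional hypothesis that $\mathcal{D}_k^M(f)$ has compact support, in which case $\|\Delta^m_{k,M^{-1}}f\|_{L_k^2(\mathbb{R})}\le(\sigma_f)^{2m}\|f\|_{L_k^2(\mathbb{R})}$ and the exponential series converges absolutely; this is precisely the setting in which the paper actually invokes \eqref{e:3.6} (Theorems \ref{t:5.5} and \ref{t:5.6}). You should either add that support hypothesis to (iii) or replace the dominated-convergence claim with this quantitative bound; as written, that step fails.
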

\begin{proof}
The proof follows directly from the definitions of the linear canonical Dunkl transform and the linear canonical Dunkl operator.
\end{proof}
\begin{remark} \label{r:3.1}
This remark emphasizes the significant link between the linear canonical Dunkl transform and the Dunkl transform, as well as between the linear canonical Dunkl operator and the Dunkl operator.
\begin{itemize}
 \item[$(i).$] The linear canonical Dunkl transform of 
$f$ can be interpreted in the context of the Dunkl transform of $\tilde{f}$ in the following manner:
\begin{eqnarray*} 
 \mathcal{D}_k^M(f)(\lambda) 
 &=& \frac{e^{\frac{i}{2}\frac{d}{b}\lambda^2}}{(ib)^{k+1}}\, \mathcal{D}_k(\tilde{f})\left(\frac{\lambda}{b}\right),~b\neq 0,
\end{eqnarray*}
where $\tilde{f}(x) =  e^{\frac{i}{2}\frac{a}{b}x^2}f(x).$\\
\item[$(ii).$] For $f\in \mathcal{S}(\mathbb{R})$, we have
$$ \Lambda_{k, M^{-1}}(f)(x) = e^{-\frac{i}{2}\frac{a}{b}x^2}\, \Lambda_k \left( e^{\frac{i}{2}\frac{a}{b}x^2}f\right)(x).$$
\end{itemize}
\end{remark}
We prove the following Proposition by using Proposition \ref{p:3.4}.
\begin{proposition} \label{p:2.1}
Let $f \in C^\infty(\mathbb{R})$. Then
\begin{itemize}
\item [$(i)$] For every $n\in \mathbb{N}^*$, $\Lambda_{k, M^{-1}}^nf$  belongs to $C^\infty(\mathbb{R})$.\\
\item[$(ii)$] Let  $R>0$ and $n\in \mathbb{N}^*$. For every $x\in [-R,R]$, there exist  $\xi_i,$ \newline $i=0,1,2, ..., n$ which depends on $x$ and $n$, such that 
\begin{equation*}
|\Lambda_{k, M^{-1}}^nf(x)| \lesssim   \sum_{j=0}^{n} \binom{n}{j}\,|f^{(n-j)}(x)|+ C_n \sum_{i=0}^n \sum_{j=0}^{n}|f^{(n-j)}(\xi_i)|,
\end{equation*}
for some positive constant  $C_n$.\\
\item[$(iii)$] For $R>0$ and $n\in \mathbb{N}^*$. If $x\in \mathbb{R}$ with $|x|> R$, then there exist $C_n^0>0$ such that 
\begin{eqnarray*}
|\Lambda_{k, M^{-1}}^nf(x)| &\lesssim&\sum_{j=0}^{n} \binom{n}{j}\,|P_j(x)|\,|f^{(n-j)}(x)|+ C^0_n\, \sum_{i=0}^{n-1} \sum_{j=0}^{i} \binom{i}{j}\,|P_j(x)|\,|f^{(i-j)}(x)|\\
 &+&C^0_n\,\sum_{i=0}^{n-1}\sum_{j=0}^{i} \binom{i}{j}\,|P_j(-x)|\,|f^{(i-j)}(-x)|.
\end{eqnarray*}
where  $P_k(x)$ is a polynomial of degree at most $k$.
\end{itemize}
\end{proposition}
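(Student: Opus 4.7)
The plan is to exploit the conjugation identity from Remark~\ref{r:3.1}(ii), namely $\Lambda_{k,M^{-1}}(f)(x)=e^{-\frac{i}{2}\frac{a}{b}x^2}\Lambda_k\bigl(e^{\frac{i}{2}\frac{a}{b}x^2}f\bigr)(x)$, in order to transfer all three statements back to the ordinary Dunkl operator, where they are already known via Proposition~\ref{p:3.4}. Writing $\alpha:=a/b$ (well defined since $b\neq 0$) and $g(x):=e^{i\alpha x^2/2}f(x)$, a short induction on $n$ gives the iterated identity
\[
\Lambda_{k,M^{-1}}^{n}f(x)=e^{-i\alpha x^2/2}\,\Lambda_k^{n}g(x);
\]
the inductive step is immediate because applying $\Lambda_{k,M^{-1}}$ to $e^{-i\alpha x^2/2}\Lambda_k^n g$ reintroduces a factor $e^{i\alpha x^2/2}$ that cancels the existing one, leaving only $\Lambda_k^{n+1}g$. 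In particular $|\Lambda_{k,M^{-1}}^n f(x)|=|\Lambda_k^n g(x)|$, so it suffices to estimate the right-hand side in terms of derivatives of $f$.

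Next I would record that $(e^{i\alpha x^2/2})^{(j)}=P_j(x)\,e^{i\alpha x^2/2}$ for some polynomial $P_j$ of degree at most $j$, proved by induction using the recursion $P_{j+1}=P_j'+i\alpha xP_j$ with $P_0=1$. The Leibniz rule then gives, for every $m\in\mathbb{N}^*$,
\[
g^{(m)}(x)=e^{i\alpha x^2/2}\sum_{j=0}^{m}\binom{m}{j}P_j(x)\,f^{(m-j)}(x),
\]
so that $|g^{(m)}(x)|\le\sum_{j=0}^{m}\binom{m}{j}|P_j(x)|\,|f^{(m-j)}(x)|$; this is the only analytic input beyond Proposition~\ref{p:3.4} that is needed.

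Part (i) is then immediate: $f\in C^\infty$ implies $g\in C^\infty$, so Proposition~\ref{p:3.4}(i) gives $\Lambda_k^n g\in C^\infty$, and multiplication by the smooth function $e^{-i\alpha x^2/2}$ yields $\Lambda_{k,M^{-1}}^n f\in C^\infty$. For part (ii), I would apply Proposition~\ref{p:3.4}(ii) to $g$ on $[-R,R]$ to obtain $|\Lambda_k^n g(x)|\le|g^{(n)}(x)|+C_n\sum_{i=1}^{n}|g^{(n)}(\xi_i)|$, substitute the Leibniz bound above, and absorb the suprema $\sup_{[-R,R]}|P_j|$ (finite by compactness) into the implicit constant hidden in the $\lesssim$. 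For part (iii), I would instead apply Proposition~\ref{p:3.4}(iii) to $g$ for $|x|>R$, obtaining a bound in $|g^{(n)}(x)|$ and $|g^{(i)}(\pm x)|$ for $0\le i\le n-1$; substituting the Leibniz expansion for each of those derivatives reproduces exactly the three sums displayed in (iii), the polynomial factors $|P_j(\pm x)|$ now being left explicit since $|x|$ is unbounded. The main obstacle is purely organizational: keeping the Leibniz indices aligned so that the polynomial paired with each $f^{(i-j)}$ in (iii) carries the correct argument ($x$ when it comes from $g^{(i)}(x)$, $-x$ when it comes from $g^{(i)}(-x)$). Once the conjugation identity and the polynomial-degree lemma are established, everything reduces to Proposition~\ref{p:3.4} and careful bookkeeping.
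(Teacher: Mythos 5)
Your proposal is correct and follows essentially the same route as the paper: both rest on the conjugation identity $\Lambda_{k,M^{-1}}^n f = e^{-\frac{i}{2}\frac{a}{b}x^2}\Lambda_k^n\bigl(e^{\frac{i}{2}\frac{a}{b}x^2}f\bigr)$ from Remark~\ref{r:3.1}(ii), the Leibniz expansion $g^{(m)}(x)=e^{\frac{i}{2}\frac{a}{b}x^2}\sum_{j=0}^{m}\binom{m}{j}P_j(x)f^{(m-j)}(x)$, and an application of Proposition~\ref{p:3.4}(ii)--(iii) to $g$, with the polynomial factors bounded on $[-R,R]$ in part (ii) and kept explicit in part (iii). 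Your extra detail on the induction for the iterated identity and the recursion $P_{j+1}=P_j'+i\alpha xP_j$ only makes explicit what the paper leaves implicit.
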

\begin{proof}
The proof of $(i)$ immediately follows from the definition of $\Lambda_{k,M^{-1}}$.\\\\
$(ii).$ We observe from Remark \ref{r:3.1}$(ii)$ that
$$ \Lambda_{k, M^{-1}}(f)(x) = e^{-\frac{i}{2}\frac{a}{b}x^2}\, \Lambda_k \left( e^{\frac{i}{2}\frac{a}{b}x^2}f\right)(x).$$
This implies
\begin{equation*}
  \Lambda^n_{k, M^{-1}}(f)(x) = e^{-\frac{i}{2}\frac{a}{b}x^2}\, \Lambda^n_k \left( e^{\frac{i}{2}\frac{a}{b}x^2}f\right)(x), \quad n \in \mathbb{N}^*. 
\end{equation*}
Let us take $g(x) =e^{\frac{i}{2}\frac{a}{b}x^2}f(x)$. A simple calculation gives that 
\begin{equation}\label{e:3.9}
    g^{(n)}(x) = e^{\frac{i}{2}\frac{a}{b}x^2}\, \sum_{j=0}^{n} \binom{n}{j}\,P_j(x)\,f^{(n-j)}(x),
\end{equation}
where $P_j(x)$ is a polynomial of degree $j$. We consider
\begin{eqnarray*}
|  \Lambda^n_{k, M^{-1}}(f)(x)| = |   \Lambda^n_k \left( e^{\frac{i}{2}\frac{a}{b}x^2}f\right)(x)| = |\Lambda^n_k (g)(x)|.
\end{eqnarray*}
Using Proposition \ref{p:3.4}$(ii)$ and \eqref{e:3.9}, we deduce that
\begin{eqnarray*}
|  \Lambda^n_{k, M^{-1}}(f)(x)| &\le& |g^{(n)}(x)|+C_n \sum_{i=0}^n |g^{(n)}(\xi_i)| 
\end{eqnarray*}
\begin{eqnarray*}
&=& \sum_{j=0}^{n} \binom{n}{j}\,|P_j(x)|\,|f^{(n-j)}(x)|+ C_n \sum_{i=0}^n \sum_{j=0}^{n} \binom{n}{j}\,|P_j(\xi_i)||\,f^{(n-j)}(\xi_i)|.
\end{eqnarray*}
Since $x\in \left[R,-R \right]$, we can bound $|P_j(x)|$ by some constant. Thus,
\begin{eqnarray*}
|  \Lambda^n_{k, M^{-1}}(f)(x)| \lesssim  \sum_{j=0}^{n} \binom{n}{j}\,|f^{(n-j)}(x)|+ C_n \sum_{i=0}^n \sum_{j=0}^{n}|f^{(n-j)}(\xi_i)|.
\end{eqnarray*}
\\
$(iii).$ For $|x|>R$ and using Proposition \ref{p:3.4}$(iii)$, we derive that
\begin{eqnarray*}
| \Lambda^n_{k, M^{-1}}(f)(x)| &\le&  |g^{(n)}(x)| +C^0_n\, \sum_{i=0}^{n-1} \left(|g^{(i)}(x)|+|g^{(i)}(-x)|\right)\\
&\lesssim&\sum_{j=0}^{n} \binom{n}{j}\,|P_j(x)|\,|f^{(n-j)}(x)|+ C^0_n\, \sum_{i=0}^{n-1} \sum_{j=0}^{i} \binom{i}{j}\,|P_j(x)|\,|f^{(i-j)}(x)|\\
 &+&C^0_n\,\sum_{i=0}^{n-1}\sum_{j=0}^{i} \binom{i}{j}\,|P_j(-x)|\,|f^{(i-j)}(-x)|.
\end{eqnarray*}
Hence, we obtain the desired result.  
\end{proof}
\section{Linear canonical Dunkl Sobolev space} \label{S4}
In many mathematical and physical problems, classical function spaces such as $L^p$ or 
$\mathcal{C}^k$ often fails to capture essential properties of solutions to differential equations, particularly when dealing with weak derivatives, irregular functions, or variational methods. Sobolev spaces provide a framework to overcome these limitations by incorporating function values and their derivatives in a generalized sense. This subsection introduces the Sobolev space associated with the linear canonical Dunkl operator, which may extend applications to complex systems governed by partial differential equations. By exploring the Sobolev space, we aim to develop a deeper theoretical understanding of function behavior and facilitate further advancements in mathematical analysis and its applications in physics.
\begin{definition}
Let $s \in \mathbb{R},$ we define the Sobolev space associated with the linear canonical Dunkl transform. 
$${\textbf{W}^{s}_{k,M}(\mathbb{R})} = \{ h \in \mathcal{S' (\mathbb{R}}) : (1+|\lambda|^2)^{\frac{s}{2}} \mathcal{D}^M_k(h) \in L^2_k(\mathbb{R})\}. $$
We define the inner product $\langle \cdot,\cdot \rangle_{\textbf{W}^{s}_{k,M}(\mathbb{R})}$  as follows
\begin{equation*}
\langle f,g \rangle_{\mathbf{W}^{s}_k(\mathbb{R})} = \int_{\mathbb{R}}  (1+|\lambda |^2)^s\, \mathcal{D}^M_k(f)(\lambda)\,\overline{\mathcal{D}^M_k(g)(\lambda)}\,d \mu_k(\lambda), \qquad\forall f,g \in \textbf{W}^{s}_{k,M}(\mathbb{R}).   \label{eq: 5.1}  
\end{equation*}
\end{definition}
For $k = -\frac{1}{2}$, the linear canonical Dunkl  Sobolev space reduces to the Sobolev space in the theory of the Fourier transform.

\begin{proposition}\label{P :5.3}
    The space $\mathbf{W}^{s}_{k,M}(\mathbb{R}), \,\,s\in \mathbb{R}$ equipped with the inner product $\langle \cdot,\cdot \rangle_{\mathbf{W}^{s}_{k,M}(\mathbb{R})}$ is a Hilbert space.
\end{proposition}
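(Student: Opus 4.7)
The plan is to identify $\mathbf{W}^{s}_{k,M}(\mathbb{R})$ isometrically with $L^2_k(\mathbb{R})$ via a weighted linear canonical Dunkl transform, and then transport the Hilbert space structure. First I would verify that $\langle\cdot,\cdot\rangle_{\mathbf{W}^{s}_{k,M}(\mathbb{R})}$ is a genuine inner product on this space: sesquilinearity is immediate from the linearity of $\mathcal{D}^M_k$, and positivity follows because the weight $(1+|\lambda|^2)^s$ is strictly positive. For the definite part, if $\langle f,f\rangle = 0$, then $(1+|\lambda|^2)^{s/2}\mathcal{D}^M_k(f) = 0$ in $L^2_k(\mathbb{R})$, hence $\mathcal{D}^M_k(f) = 0$ in $\mathcal{S}'(\mathbb{R})$; the inversion formula \eqref{e:2.2} extended to $\mathcal{S}'(\mathbb{R})$ by duality (using that $\mathcal{D}^M_k$ is a topological isomorphism of $\mathcal{S}(\mathbb{R})$) then forces $f = 0$.

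For completeness, I would introduce the map $U : \mathbf{W}^{s}_{k,M}(\mathbb{R}) \to L^2_k(\mathbb{R})$ defined by
\[
(Uh)(\lambda) = (1+|\lambda|^2)^{s/2}\,\mathcal{D}^M_k(h)(\lambda).
\]
By construction $U$ is a well-defined linear isometry whose image lies in $L^2_k(\mathbb{R})$. To show surjectivity, given $g \in L^2_k(\mathbb{R})$, I would set $T_g = (1+|\lambda|^2)^{-s/2} g$, regarded as a tempered distribution on $\mathbb{R}$, and define $h := \mathcal{D}^{M^{-1}}_k(T_g) \in \mathcal{S}'(\mathbb{R})$ using the $\mathcal{S}'$ extension of the inverse LCDT. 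Applying $\mathcal{D}^M_k$ and using \eqref{e:2.2} gives $\mathcal{D}^M_k(h) = T_g$, so $(1+|\lambda|^2)^{s/2}\mathcal{D}^M_k(h) = g \in L^2_k(\mathbb{R})$. Hence $h \in \mathbf{W}^{s}_{k,M}(\mathbb{R})$ and $Uh = g$. Since $L^2_k(\mathbb{R})$ is complete, the bijective isometry $U$ transfers completeness to $\mathbf{W}^{s}_{k,M}(\mathbb{R})$, finishing the proof.

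The main technical obstacle is verifying that $T_g = (1+|\lambda|^2)^{-s/2} g$ genuinely defines a tempered distribution, so that its inverse LCDT is meaningful in $\mathcal{S}'(\mathbb{R})$. For $s \ge 0$ this is immediate: the weight is bounded, so $T_g \in L^2_k(\mathbb{R}) \subset \mathcal{S}'(\mathbb{R})$. For $s < 0$, the weight has polynomial growth, and one must show that the pairing $\phi \mapsto \int_{\mathbb{R}} (1+|\lambda|^2)^{-s/2}g(\lambda)\,\phi(\lambda)\,d\mu_k(\lambda)$ is continuous on $\mathcal{S}(\mathbb{R})$. This is handled by Cauchy--Schwarz: the integral is bounded by $\|g\|_{L^2_k(\mathbb{R})}\,\|(1+|\lambda|^2)^{-s/2}\phi\|_{L^2_k(\mathbb{R})}$, and the latter norm is dominated by a finite sum of Schwartz seminorms of $\phi$. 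Equivalently, one can verify completeness directly: take a Cauchy sequence $(f_n) \subset \mathbf{W}^s_{k,M}(\mathbb{R})$, observe that $(Uf_n)$ is Cauchy in $L^2_k(\mathbb{R})$ with some limit $g$, and construct the candidate limit $f$ as above; the $\mathbf{W}^s_{k,M}$-norm convergence $\|f_n - f\|_{\mathbf{W}^s_{k,M}} = \|Uf_n - g\|_{L^2_k} \to 0$ is then automatic.
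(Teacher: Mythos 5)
Your proof is correct. The paper itself omits the argument entirely (it states only that ``the proof is straightforward''), and what you have written is a complete and careful version of the standard argument: checking definiteness of the inner product via injectivity of $\mathcal{D}^M_k$ on $\mathcal{S}'(\mathbb{R})$, and transporting completeness from $L^2_k(\mathbb{R})$ through the surjective isometry $h \mapsto (1+|\lambda|^2)^{s/2}\mathcal{D}^M_k(h)$, including the only genuinely delicate point, namely that $(1+|\lambda|^2)^{-s/2}g$ is a tempered distribution for $s<0$.
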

\begin{proof}
    The proof of the proposition is straightforward and hence it is omitted.
\end{proof}

\begin{remark} We observe that
    \begin{itemize}
        \item [(i)] For all $s,t \in \mathbb{R}$, such that $t>s$, the Sobolev space $\mathbf{W}^{t}_{k,M}(\mathbb{R})$ contained in $\mathbf{W}^{s}_{k,M}(\mathbb{R})$.\\
        \item[(ii)] $\mathbf{W}^{0}_{k,M}(\mathbb{R}) = L^2_k(\mathbb{R}).$
    \end{itemize}
\end{remark}
The forthcoming proposition reveals another norm on $\mathbf{W}^{s}_{k,M}(\mathbb{R})$. 
\begin{proposition}We have the following results.
\begin{itemize}
\item [$(i)$] For $m\in \mathbb{N}^*$, the space $\mathbf{W}^{m}_{k,M}(\mathbb{R})$ is equal to 
\begin{equation*}
E_m=\left\{ f\in L_k^2(\mathbb{R}): \Lambda_{k, M^{-1}}^nf \in L_k^2(\mathbb{R}), \forall n\le m\right\}.     
\end{equation*}
\item [$(ii)$] $ \|f\|^2_{\mathbf{W}^{s}_{k,M}(\mathbb{R})} = \sum_{n=0}^{m}\|\Lambda_{k, M^{-1}}^nf \|^2_{L_k^2(\mathbb{R})}.$
\end{itemize}
\end{proposition}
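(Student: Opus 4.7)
The overall strategy is to transfer both statements to the Fourier side via Plancherel's identity \eqref{eq :2.1} and the intertwining relation \eqref{e:2.3}, which turns $\Lambda_{k,M^{-1}}^n f$ into multiplication of $\mathcal{D}_k^M(f)(\lambda)$ by $(i\lambda/b)^n$. Under this dictionary, the defining condition of $\mathbf{W}^m_{k,M}(\mathbb{R})$ (a weight $(1+|\lambda|^2)^{m/2}$) and the defining condition of $E_m$ (finiteness of $\|\Lambda_{k,M^{-1}}^n f\|_{L_k^2}$ for $n \le m$) become two polynomial-type $L_k^2$ conditions on $\mathcal{D}_k^M(f)$, and one just bridges them by the binomial expansion of $(1+|\lambda|^2)^m$.

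For part (i), the plan is to prove the two inclusions. For $\mathbf{W}^m_{k,M}(\mathbb{R}) \subseteq E_m$: assume $(1+|\lambda|^2)^{m/2}\mathcal{D}_k^M(f) \in L_k^2(\mathbb{R})$. For each $0 \le n \le m$, the pointwise bound $|\lambda|^n \le (1+|\lambda|^2)^{m/2}$ yields $\lambda^n \mathcal{D}_k^M(f) \in L_k^2(\mathbb{R})$; by \eqref{e:2.3} this identifies (up to the constant $(i/b)^n$) with $\mathcal{D}_k^M(\Lambda_{k,M^{-1}}^n f)$, and Plancherel \eqref{eq :2.1} then gives $\Lambda_{k,M^{-1}}^n f \in L_k^2(\mathbb{R})$. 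Conversely, if $f \in E_m$, Plancherel and \eqref{e:2.3} give $\lambda^n \mathcal{D}_k^M(f) \in L_k^2(\mathbb{R})$ for every $n \le m$; applying the binomial identity $(1+|\lambda|^2)^m = \sum_{n=0}^m \binom{m}{n}|\lambda|^{2n}$ termwise assembles $(1+|\lambda|^2)^{m/2}\mathcal{D}_k^M(f)$ as an element of $L_k^2(\mathbb{R})$, so $f \in \mathbf{W}^m_{k,M}(\mathbb{R})$.

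For part (ii), the plan is a direct computation of the right-hand side. By Plancherel \eqref{eq :2.1} followed by \eqref{e:2.3},
\begin{equation*}
\|\Lambda_{k,M^{-1}}^n f\|^2_{L_k^2(\mathbb{R})} = \|\mathcal{D}_k^M(\Lambda_{k,M^{-1}}^n f)\|^2_{L_k^2(\mathbb{R})} = \int_\mathbb{R} \frac{|\lambda|^{2n}}{|b|^{2n}} |\mathcal{D}_k^M(f)(\lambda)|^2\, d\mu_k(\lambda).
\end{equation*}
Summing from $n=0$ to $m$ and swapping sum and integral produces an integral of $|\mathcal{D}_k^M(f)|^2$ against a polynomial in $|\lambda|^2$; the binomial expansion of $(1+|\lambda|^2)^m$ then identifies this polynomial with the Sobolev weight, so the total collapses to
\begin{equation*}
\int_\mathbb{R} (1+|\lambda|^2)^m\, |\mathcal{D}_k^M(f)(\lambda)|^2\, d\mu_k(\lambda) = \|f\|^2_{\mathbf{W}^m_{k,M}(\mathbb{R})},
\end{equation*}
which is the claimed identity.

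The central technical step is the binomial reassembly in part (ii): one must verify that the polynomial weights arising from \eqref{e:2.3} recombine \emph{exactly} into $(1+|\lambda|^2)^m$, so that the sum of $L_k^2$ norms reconstitutes the Sobolev norm on the nose rather than merely up to equivalence. Once this termwise identification is done inside the integral, the remaining manipulation is purely a rearrangement under $\int\cdot\, d\mu_k$.
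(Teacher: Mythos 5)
Your part (i) is essentially the paper's own argument: the inclusion $\mathbf{W}^m_{k,M}(\mathbb{R})\subseteq E_m$ via the pointwise bound $|\lambda|^{2n}\le(1+|\lambda|^2)^m$ together with \eqref{e:2.3} and Plancherel, and the reverse inclusion via an expansion of $(1+|\lambda|^2)^m$ into powers $|\lambda|^{2n}$. That part is fine.

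In part (ii), however, the step you yourself single out as the central one --- that the weights ``recombine \emph{exactly} into $(1+|\lambda|^2)^m$'' --- does not go through. By \eqref{e:2.3} and Plancherel you get
\begin{equation*}
\sum_{n=0}^{m}\|\Lambda_{k,M^{-1}}^n f\|^2_{L_k^2(\mathbb{R})}
=\int_{\mathbb{R}}\Bigl(\sum_{n=0}^{m}\bigl|\lambda/b\bigr|^{2n}\Bigr)\,|\mathcal{D}_k^M(f)(\lambda)|^2\,d\mu_k(\lambda),
\end{equation*}
and the weight here is the truncated geometric sum $\sum_{n=0}^{m}|\lambda/b|^{2n}$, not the binomial sum $\sum_{n=0}^{m}\binom{m}{n}|\lambda|^{2n}=(1+|\lambda|^2)^m$: the coefficients $\binom{m}{n}$ are absent and the powers of $|b|$ are present. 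The two weights are mutually comparable (with constants depending on $m$ and $b$), so what you actually obtain is a two-sided estimate, i.e.\ equivalence of the norms $\|f\|_{\mathbf{W}^m_{k,M}(\mathbb{R})}$ and $\bigl(\sum_{n=0}^m\|\Lambda_{k,M^{-1}}^nf\|^2_{L_k^2(\mathbb{R})}\bigr)^{1/2}$, not the identity claimed. This is in fact all the paper's own proof establishes (its two inequalities carry constants $C(b)$ and $C_2(b)$), so the exact equality in the statement of (ii) should be read as a norm equivalence; but as written, your claim that ``the total collapses'' to the Sobolev norm on the nose is a genuine error, and the proof should be rephrased as two inequalities rather than one chain of identities.
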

\begin{proof}
Let $f\in \mathbf{W}^{m}_{k,M}(\mathbb{R})$. For $n \le m$, we have the following inequality
\begin{eqnarray*}
\int_{\mathbb{R}}|\lambda|^{2n}\,|\mathcal{D}_k^M(f)(\lambda)|^2\, d\mu_k(\lambda) \le \int_{\mathbb{R}}(1+|\lambda|^2)^m\,|\mathcal{D}_k^M(f)(\lambda)|^2\,d\mu_k(\lambda).
\end{eqnarray*}
Using \eqref{e:2.3}, we obtain that
\begin{eqnarray*}
|b|^{2n}\int_{\mathbb{R}}|\mathcal{D}_k^M(\Lambda_{k, M^{-1}}^nf)(\lambda)|^2\, d\mu_k(\lambda) &\le& \|f\|^2_{\mathbf{W}^{m}_{k,M}(\mathbb{R})}.
\end{eqnarray*}
Thus
\begin{eqnarray*}\|\Lambda_{k, M^{-1}}^n(f)
\|^2_{L_k^2(\mathbb{R})} &\le& \frac{1}{|b|^{2n}}\,\|f\|^2_{\mathbf{W}^{m}_{k,M}(\mathbb{R})}.
\end{eqnarray*}
So
\begin{eqnarray*} \sum_{n=0}^{m}\|\Lambda_{k, M^{-1}}^n(f)\|^2_{L_k^2(\mathbb{R})}&\le & C(b)\,\|f\|^2_{\mathbf{W}^{m}_{k,M}(\mathbb{R})}.
\end{eqnarray*}
On the other hand, let $f\in E_n$. From Plancherel's formula and simple calculation, we have the following inequalities
\begin{eqnarray*}
\int_{\mathbb{R}}(1+|\lambda|^2)^m\, |\mathcal{D}_k^M(f)(\lambda)|^2\, d\mu_k(\lambda) &\le& \int_{\mathbb{R}}C\left(1+\sum_{n\le m}^{}|\lambda|^{2n}\right)|\mathcal{D}_k^M(f)(\lambda)|^2\,d\mu_k(\lambda)
\end{eqnarray*}
\begin{eqnarray*}
&=& C\|f\|_{L_k^2(\mathbb{R})}+C\sum_{n\le m}^{}\int_{\mathbb{R}} |\lambda|^{2n}\,|\mathcal{D}_k^M(f)(\lambda)|^2\,d\mu_k(\lambda)\\
&=& C\|f\|_{L_k^2(\mathbb{R})}+ C\,|b|^{2n}
\sum_{n\le m}^{}\int_{\mathbb{R}}|\mathcal{D}_k^M(\Lambda_{k, M^{-1}}^nf)(\lambda)|^2\, d\mu_k(\lambda)\\
&=& C_2(b)\,\sum_{n=0}^{m} \int_{\mathbb{R}}|\Lambda_{k, M^{-1}}^n(f)(\lambda)|^2\,d\mu_k(\lambda)\\
\|f\|_{\mathbf{W}^{m}_{k,M}(\mathbb{R})} &\le& C_2(b)\, \sum_{n=0}^{m}\|\Lambda_{k, M^{-1}}^n(f)\|^2_{L_k^2(\mathbb{R})}.
\end{eqnarray*}
This completes the proof.
\end{proof}
\begin{theorem} \label{t:2.6}
Let $p\in \mathbb{N}$    and $  s\in \mathbb{R}$, such that $s>k+p+1$. Then
$\mathbf{W}^{s}_{k,M}(\mathbb{R})$ continuously embedded in $\mathcal{C}^p(\mathbb{R})$.
\end{theorem}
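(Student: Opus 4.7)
The plan is to invoke the inversion formula \eqref{e:2.2} to write $f$ (a.e.)\ as an integral of $\mathcal{D}_k^M f$ against the kernel of $\mathcal{D}_k^{M^{-1}}$, and then differentiate $p$ times under the integral sign by dominated convergence; the continuity of the embedding will follow as a quantitative local $L^\infty$-bound on each derivative.

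First I would verify that $\mathcal{D}_k^M f \in L_k^1(\mathbb{R})$, so that the integral representation coming from \eqref{e:2.2} is absolutely convergent. Cauchy--Schwarz with the weight $(1+|\lambda|^2)^{s/2}$ gives
\begin{equation*}
\int_{\mathbb{R}} |\mathcal{D}_k^M f(\lambda)|\,d\mu_k(\lambda) \le \|f\|_{\mathbf{W}^{s}_{k,M}(\mathbb{R})}\Bigl(\int_{\mathbb{R}} (1+|\lambda|^2)^{-s}\,d\mu_k(\lambda)\Bigr)^{1/2},
\end{equation*}
and the last integral converges provided $s>k+1$, which is weaker than our hypothesis. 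Next, using Remark~\ref{r:3.1}$(i)$ applied to $M^{-1}$, I factor the inverse kernel as a chirp in $\lambda$ times a chirp in $x$ times $E_k(i\lambda/b,x)$. Leibniz's rule, the bound $|E_k(i\mu,x)|\le 1$, and an inductive argument built on the identity
\begin{equation*}
\frac{d}{dx}E_k(i\mu,x) = i\mu\,E_k(i\mu,x) - \frac{2k+1}{2x}\bigl[E_k(i\mu,x) - E_k(-i\mu,x)\bigr]
\end{equation*}
then yield, for each $R>0$ and each $0\le j\le p$, a constant $C_{j,R}>0$ with $|\partial_x^j E_k^{M^{-1}}(\lambda,x)| \le C_{j,R}(1+|\lambda|)^j$, uniformly in $|x|\le R$ and $\lambda\in\mathbb{R}$.

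A second Cauchy--Schwarz then dominates the $j$-th differentiated integrand on $|x|\le R$ by the $\lambda$-integrable function $C_{j,R}(1+|\lambda|)^j|\mathcal{D}_k^M f(\lambda)|$, whose $L_k^1$-norm is controlled by $C_{j,R}\|f\|_{\mathbf{W}^{s}_{k,M}(\mathbb{R})}\bigl(\int_{\mathbb{R}}(1+|\lambda|)^{2j}(1+|\lambda|^2)^{-s}\,d\mu_k(\lambda)\bigr)^{1/2}$. Because $d\mu_k$ carries the density $|\lambda|^{2k+1}$, the weight integral converges iff $s>k+j+1$, and the hypothesis $s>k+p+1$ secures this for every $j\le p$ simultaneously. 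Dominated convergence then legitimises differentiating $p$ times under the integral and, combined with continuity of $x\mapsto \partial_x^j E_k^{M^{-1}}(\lambda,x)$, produces $f\in\mathcal{C}^p(\mathbb{R})$ together with $\sup_{|x|\le R}|f^{(j)}(x)| \le C'_{j,R}\|f\|_{\mathbf{W}^{s}_{k,M}(\mathbb{R})}$ for each $j\le p$ and $R>0$, which is precisely the continuity of the embedding into $\mathcal{C}^p(\mathbb{R})$ with its natural Fréchet topology. The main obstacle I anticipate is the derivative estimate on $E_k^{M^{-1}}$: since $\Lambda_k$ carries a reflection term, $\partial_x E_k(i\mu,x)$ is not simply $i\mu E_k(i\mu,x)$, so the required bound $|\partial_x^j E_k(i\mu,x)|\lesssim (1+|\mu|)^j$ (for bounded $|x|$) has to be built up inductively from the identity above, or alternatively extracted from the Bessel-function series defining $E_k$.
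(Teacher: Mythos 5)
Your proposal is correct and follows the same overall route as the paper: Cauchy--Schwarz against the weight $(1+|\lambda|^2)^{s/2}$ to put $\mathcal{D}_k^M f$ in $L_k^1(\mathbb{R})$, the inversion formula \eqref{e:2.2} to represent $f$, a derivative bound on the kernel, and dominated convergence to differentiate $p$ times under the integral. The differences are in the details of the kernel estimate. The paper simply cites Trim\'eche's global bound \eqref{e3.5}, $|\frac{d^n}{dx^n}E_k(i\lambda,x)|\le|\lambda|^n$, and writes the representation \eqref{e3.4} with the chirp factors $e^{\pm\frac{i}{2}\frac{a}{b}x^2}$ already absorbed, which lets it assert the global bound \eqref{e3.11} and conclude continuity for the norm $\sum_{n\le p}\sup_{x\in\mathbb{R}}|f^{(n)}(x)|$. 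You instead keep the chirps explicit and rebuild the kernel estimate inductively from the eigenvalue identity (your identity for $\partial_x E_k(i\mu,x)$ is correct, since $E_k(i\mu,-x)=E_k(-i\mu,x)$, and the reflection term is in fact controlled by the explicit formula $E_k(i\mu,x)-E_k(-i\mu,x)=\frac{i\mu x}{k+1}j_{k+1}(\mu x)$). Because differentiating the chirp produces polynomial factors in $x$, your constants $C_{j,R}$ depend on $R$ and you only obtain $\sup_{|x|\le R}|f^{(j)}(x)|\lesssim_{j,R}\|f\|_{\mathbf{W}^{s}_{k,M}(\mathbb{R})}$, i.e.\ continuity into $\mathcal{C}^p(\mathbb{R})$ with the topology of locally uniform convergence rather than the global sup norm the paper uses. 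This is arguably the more defensible conclusion: the paper's passage from Remark \ref{r:3.1} to \eqref{e3.4} silently discards the $x$-chirp, and for $a\neq 0$ the product rule applied to $e^{-\frac{i}{2}\frac{a}{b}x^2}$ does not obviously yield globally bounded derivatives of $f$ for $j\ge 1$. So your argument is sound for the Fr\'echet topology on $\mathcal{C}^p(\mathbb{R})$; if the global sup-norm statement is intended, an additional argument (or the restriction $a=0$) is needed, and that gap is present in the paper's own proof as well.
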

\begin{proof}
The proof is divided into two parts.  First, we  prove the  set inclusion $\mathbf{W}^{s}_{k,M}(\mathbb{R})\subset \mathcal{C}^p(\mathbb{R})$. Then, we will show the continuity from  $\mathbf{W}^{s}_{k,M}(\mathbb{R})$ to $\mathcal{C}^p(\mathbb{R})$. Let $f\in \mathbf{W}^{s}_{k,M}(\mathbb{R})$ and let us consider
\begin{eqnarray*}
\int_{\mathbb{R}} |\mathcal{D}_k^M(f)(\lambda)|\,d\mu_k(\lambda)= \int_{\mathbb{R}} (1+|\lambda|^2)^{\frac{s}{2}}\,(1+|\lambda|^2)^{\frac{-s}{2}}\,|\mathcal{D}_k^M(f)(\lambda)|\,d\mu_k(\lambda).
\end{eqnarray*}
Applying the Cauchy-Schwarz inequality, we obtain 
\begin{eqnarray*}
\|\mathcal{D}_k^M(f)\|_{L_k^1(\mathbb{R})} \le C\,\|f\|_{\mathbf{W}^{s}_{k,M}(\mathbb{R})}.
\end{eqnarray*}
This implies $\mathcal{D}_k^M(f) \in L_k^1(\mathbb{R})$. Doing some manipulation we see that $f\in L_k^2(\mathbb{R})$ and Plancherel's  formula guaranty that $\mathcal{D}_k^M(f) \in L_k^2(\mathbb{R})$. So we have $\mathcal{D}_k^M(f) \in L_k^1 (\mathbb{R}) \cap L_k^2(\mathbb{R}) $ and apply the  Riemann-Lebesgue lemma for the inverse linear canonical Dunkl transform implies that
$f\in \mathcal{C}_0(\mathbb{R})$. Therefore,  $f\in \mathcal{C}(\mathbb{R})$. It remains to show that $f$ is $p$ times continuously differentiable. From \eqref{e:2.2}, we have
\begin{equation*}\label{e:2.7}
f(x)= \frac{1}{(-ib)^{k+1}}\int_{\mathbb{R}} \mathcal{D}_k^M(f)(\lambda)\,E_k^{M^{-1}}(\lambda,x)\,d\mu_k(x),\,\, \text{a.e}.
\end{equation*}
 By using Remark \ref{r:3.1}, we deduce that
 \begin{eqnarray} \label{e3.4}
  f(x) = \frac{1}{|b|^{2k+2}}  \int_{\mathbb{R}}\mathcal{D}_k(f)(\lambda)\, E_k\left(\frac{i\lambda}{b},x\right)\, d\mu_k(\lambda).
 \end{eqnarray}
 Here, we recall the estimate for the derivative of the Dunkl kernel \cite{Trime'Che}
 \begin{equation} \label{e3.5}
\left|\frac{d^n}{dx^n}E_k(i\lambda,x)\right| \le |\lambda|^n.
 \end{equation}
 By differentiating \eqref{e3.4} $n$ times with respect to $x$ and applying  the dominated convergence theorem, we can push the differentiation inside the integration, then we get
\begin{eqnarray} \label{e3.6}
 \frac{d^n}{dx^n}f(x) = \frac{1}{|b|^{2k+2}}\int_{\mathbb{R}} \mathcal{D}_k(f)(\lambda)\, \frac{\partial^n}{\partial x^n}E_k(i\lambda/b,x)\,d\mu_k(\lambda),   \end{eqnarray}
for $n \le p$. We immediately see
 that $f \in \mathcal{C}^p(\mathbb{R})$, for $n\le p$. Now, we proceed to prove the continuity between $\mathcal{C}^p(\mathbb{R})$ and $\mathbf{W}^{s}_{k,M}(\mathbb{R})$. By using \eqref{e3.5}, we can deduce that
 \begin{equation} \label{e3.11}
   \left| \frac{d^n}{dx^n}f(x) \right| \le C(b)\,\|f\|_{\mathbf{W}^{s}_{k,M}(\mathbb{R})}, \quad \text{for}\quad s>k+p+1,\,\, n\le p.
 \end{equation}
This implies that the derivative of $f$ is bounded. We have the norm on $\mathcal{C}^p(\mathbb{R})$ defined as
\begin{eqnarray*}
 \|f\|_{\mathcal{C}^p(\mathbb{R})} = \sum_{n\le p}\, \sup_{x\in \mathbb{R}} \,\left| \frac{d^n}{dx^n}f(x) \right|.
\end{eqnarray*}
Invoking \eqref{e3.11}, we obtain
\begin{equation*}
 \|f\|_{\mathcal{C}^p(\mathbb{R})} \le  C'\,\|f\|_{\mathbf{W}^{s}_{k,M}(\mathbb{R})}.   
\end{equation*}
This completes the proof. 
\end{proof}
Now, we define the new seminorms on $\mathcal{S}(\mathbb{R})$ in terms of the linear canonical Dunkl operator $\Lambda_{k, M^{-1}}$. Before that, we recall some equivalent seminorms on the Schwartz space  $\mathcal{S}(\mathbb{R})$. It will help us to prove the upcoming theorem. Let $n,m,p,q \in \mathbb{N}^*$. Then we have the two sets of equivalent seminorms $S_{n,m}$ and $R_{p,q}$ which are defined below:
\begin{eqnarray*}
S_{n,m}(\phi) &=&\mathop{\underset{ x\in \mathbb{R}}{\sup}}\,(1+x^2)^n\,\left|\frac{d^{m}}{dx^{m}} \phi(x) \right|\\
R_{p,q}(\phi) &=& \mathop{\underset{ x\in \mathbb{R}}{\sup}}\,\left|\frac{d^{p}}{dx^{p}} \left((1+x^2)^q\,\phi(x)\right) \right|.
\end{eqnarray*}

\begin{theorem} \label{t3.8}
 Let $\phi \in \mathcal{S}(\mathbb{R})$ and $n, m \in \mathbb{N}^*$. Define the norm
\begin{equation*}
\|(1+x^2)^m\,\Lambda_{k, M^{-1}}^n (\phi)\|_{L_k^2(\mathbb{R})}= \left( \int_{\mathbb{R}}|(1+x^2)^m\,\Lambda_{k, M^{-1}}^n(\phi)(x)|^2\,d\mu_k(x)\right)^{\frac{1}{2}}.
\end{equation*}
Then, the collection of these norms, as $n, m \in \mathbb{N}^*$, defines a family of seminorms that generates the topology on  $\mathcal{S}(\mathbb{R})$.
\end{theorem}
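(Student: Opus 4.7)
My plan is to prove the theorem by showing that the new family $\mathcal{N}_{n,m}(\phi):=\|(1+x^2)^m\Lambda_{k,M^{-1}}^n\phi\|_{L_k^2(\mathbb{R})}$ is equivalent to the already-recalled family $\{R_{p,q}\}_{p,q\in\mathbb{N}^*}$, in the sense that every seminorm in either family is dominated by a finite sum of seminorms from the other. Since $\{R_{p,q}\}$ is known to generate the Schwartz topology, this will settle the theorem.

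For the direction $\mathcal{N}_{n,m}\lesssim$ finite sum of $R$-seminorms I will invoke Proposition~\ref{p:2.1}. Parts (ii) and (iii) there furnish a pointwise bound
\begin{equation*}
|\Lambda_{k,M^{-1}}^n\phi(x)|\;\lesssim\;\sum_{j=0}^{n}|P_j^{+}(x)||\phi^{(n-j)}(x)|+\sum_{j=0}^{n}|P_j^{-}(x)||\phi^{(n-j)}(-x)|,
\end{equation*}
separately on $|x|\le R$ and on $|x|>R$, with polynomials $P_j^{\pm}$ of degree at most $j$. Multiplying by $(1+x^2)^m$, squaring, and integrating against $d\mu_k$, I factor out finite sup-Schwartz seminorms of $\phi$ carrying sufficient extra polynomial decay so that what is left in the integral is a tail of the form $(1+x^2)^{-N}|x|^{2k+1}$, convergent for $N>k+1$ (allowed since $k\ge -1/2$). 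Reflection invariance of $d\mu_k$ absorbs the $(-x)$ arguments.

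For the reverse direction, fix $(p,q)$, set $\psi:=(1+x^2)^q\phi\in\mathcal{S}(\mathbb{R})$, and apply the Sobolev embedding of Theorem~\ref{t:2.6} with any integer $s>k+p+1$:
\begin{equation*}
R_{p,q}(\phi)=\|\psi\|_{\mathcal{C}^p(\mathbb{R})}\;\lesssim\;\|\psi\|_{\mathbf{W}^s_{k,M}(\mathbb{R})}\;\sim\;\sum_{n\le s}\|\Lambda_{k,M^{-1}}^n\psi\|_{L_k^2(\mathbb{R})},
\end{equation*}
where the last equivalence comes from the proposition immediately preceding Theorem~\ref{t3.8}. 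It then remains to bound each $\|\Lambda_{k,M^{-1}}^n[(1+x^2)^q\phi]\|_{L_k^2(\mathbb{R})}$ by a finite linear combination of $\mathcal{N}_{n',m'}(\phi)$.

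The principal obstacle is precisely this last bound. A naive iterated product rule fails, because $\Lambda_{k,M^{-1}}$ is a derivation only against \emph{even} factors (for even $u$, the reflection term of $\Lambda_k$ collapses and one obtains the clean identity $\Lambda_{k,M^{-1}}(uv)=u'v+u\,\Lambda_{k,M^{-1}}v$), whereas after one step of Leibniz the coefficient $2qx(1+x^2)^{q-1}$ is odd and the second application of $\Lambda_{k,M^{-1}}$ produces an extra contribution involving the even part of the companion factor rather than its $\Lambda_{k,M^{-1}}$-image. My plan to circumvent this is to pass through the conjugation $\Lambda_{k,M^{-1}}^n\phi=e^{-i(a/b)x^2/2}\Lambda_k^n\tilde\phi$ of Remark~\ref{r:3.1}(ii) with $\tilde\phi(x)=e^{i(a/b)x^2/2}\phi(x)$, reducing the problem to estimating $\|\Lambda_k^n[(1+x^2)^q\tilde\phi]\|_{L_k^2(\mathbb{R})}$ by $\|(1+x^2)^{m'}\Lambda_k^{n'}\tilde\phi\|_{L_k^2(\mathbb{R})}$, and then to apply Proposition~\ref{p:2.1} directly to the product $(1+x^2)^q\tilde\phi$ together with the even/odd decomposition of each companion factor (the even and odd parts of any function have $L_k^2$-norm bounded by the whole). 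The delicate bookkeeping is to verify that only finitely many indices $(n',m')$ appear and that the polynomial degrees of the coefficients remain uniformly bounded in $n$; this is the technical heart of the proof.
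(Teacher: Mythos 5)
Your strategy coincides with the paper's: the forward bound via the pointwise estimates of Proposition \ref{p:2.1} (dominating the weighted $L_k^2$ quantity by sup-type Schwartz seminorms after extracting enough extra polynomial decay to make the $|x|^{2k+1}$-weighted tail integrable), and the reverse bound via the Sobolev embedding of Theorem \ref{t:2.6} applied to $(1+x^2)^q\phi$ followed by conversion of the $\mathbf{W}^s_{k,M}$-norm into $\Lambda_{k,M^{-1}}$-seminorms. The single step you defer --- controlling $\|\Lambda_k^{n}[(1+x^2)^q\tilde\phi]\|_{L_k^2(\mathbb{R})}$ by finitely many weighted seminorms $\|(1+x^2)^{m'}\Lambda_k^{n'}\tilde\phi\|_{L_k^2(\mathbb{R})}$, which is delicate because $\Lambda_k$ is not a derivation against odd factors --- is precisely the point where the paper does not argue directly either: after rewriting $(1+|\lambda|^2)^{2l}$ as the symbol of $(I-\Lambda_k^2)^{2l}$ and applying Plancherel, it invokes Theorem 3.2(ii) and Proposition 2.2(iii) of \cite{Chettaoui} to commute the polynomial weight past the powers of the Dunkl operator, and then Remark \ref{r:3.1}(ii) to return to $\Lambda_{k,M^{-1}}$. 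So your ``technical heart'' can be closed by that citation rather than by redoing the even/odd bookkeeping; your proposed conjugation through Remark \ref{r:3.1}(ii) is exactly the right reduction, and the Leibniz-type analysis you sketch is in substance what the cited results of Chettaoui--Trim\'eche establish for $\Lambda_k$. With that reference supplied, your argument is complete and is essentially the paper's proof.
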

\begin{proof}
Assume that   $\phi \in \mathcal{S}(\mathbb{R})$. Then, we can find a positive constant $C$ and $r\in \mathbb{N}^*$ such that
\begin{equation*}
\|(1+x^2)^m\,\Lambda_{k, M^{-1}}^n (\phi)\|_{L_k^2(\mathbb{R})} \le C\,\mathop{\underset{ x\in \mathbb{R}}{\sup}} (1+x^2)^r \,|\Lambda_{k, M^{-1}}^n(\phi)(x)|.
\end{equation*}
Invoking Proposition \ref{p:2.1}, there exist  $p \in \mathbb{N}$, thus we deduce that
\begin{eqnarray*}
\|(1+x^2)^m\,\Lambda_{k, M^{-1}}^n (\phi)\|_{L_k^2(\mathbb{R})} \lesssim \mathop{\underset{ x\in \mathbb{R}}{\sup}}\,(1+x^2)^r\, |\phi^p(x)|.
\end{eqnarray*}
This implies 
\begin{equation}\label{e:2.9}
\|(1+x^2)^m\,\Lambda_{k, M^{-1}}^n (\phi)\|_{L_k^2(\mathbb{R})} \lesssim  S_{r,p}(\phi).    
\end{equation}
On the other hand, let us consider
\begin{eqnarray*}
 S_{r,p}(\phi) &=&  \mathop{\underset{ x\in \mathbb{R}}{\sup}} \,(1+x^2)^r\, |\phi^p(x)|,\quad \text{for some}\,\, r,p\in \mathbb{N}^*,\\
&\le&  \mathop{\underset{ x\in \mathbb{R}}{\sup}}\, \left|\frac{d^{p}}{dx^{p}}( (1+x^2)^{q}\,\phi(x))\right|, \quad \text{for some}\,\, p, q\in \mathbb{N}^*.
\end{eqnarray*}
By using Theorem \ref{t:2.6}, there exist a constant $C>0$, $q,l\in \mathbb{N}$ and $l>k+p+1$ such that
\begin{eqnarray*}
\mathop{\underset{ x\in \mathbb{R}}{\sup}}\, \left|\frac{d^{p}}{dx^{p}}\left( (1+x^2)^{q}\,\phi\right)\right| \le C\, \|(1+x^2)^q\,\phi\|_{\mathbf{W}^{s}_k(\mathbb{R})}
\end{eqnarray*}
\begin{eqnarray*}
&=&C \int_{\mathbb{R}}(1+|\lambda|^2)^{-l}\,(1+|\lambda|^2)^{2l}\, \left|\mathcal{D}_k\left( (1+x^2)^q \phi\right) \left(\frac{\lambda}{b}\right) \right|\, d\mu_k(\lambda).
\end{eqnarray*}
Applying the Cauchy-Schwarz inequality, we obtain that
\begin{eqnarray*}
\mathop{\underset{ x\in \mathbb{R}}{\sup}}\, \left|\frac{d^{p}}{dx^{p}}\left( (1+x^2)^{q}\,\phi\right)\right| \lesssim\frac{1}{|b|^{2k+2}} \int_{\mathbb{R}} \left(1+|\lambda|^2\right)^{4l}\left|\mathcal{D}_k((1+x^2)^q\phi)\left(\frac{\lambda}{b}\right)\right|^2\,d\mu_k(\lambda).
\end{eqnarray*}
From \eqref{e3.1}, we derive that
\begin{equation*}
=   \frac{1}{|b|^{2k+2}} \,\int_{\mathbb{R}}\left|\mathcal{D}_k\left((I - \Lambda_k^2)^{2l} (1+x^2)^q\,\phi \right)\left(\frac{\lambda}{b} \right) \right|^2 \, d\mu_k(\lambda),
\end{equation*}
where $I$ denotes the identity operator.
Using the Plancherel's formula, Theorem 3.2(ii), and Proposition 2.2(iii) from \cite{Chettaoui}, and subsequently applying Remark \ref{r:3.1}(ii), we deduce that
\begin{eqnarray*}
\mathop{\underset{ x\in \mathbb{R}}{\sup}}\, \left|\frac{d^{p}}{dx^{p}}\left( (1+x^2)^{q}\,\phi\right)\right| \lesssim \left( \int_{\mathbb{R}} (1+x^2)^{2m}|\Lambda_{k, M^{-1}}^n \varphi(y)|^2\,d\mu_k(y)\right)^{\frac{1}{2}},   
\end{eqnarray*}
where  $m,n\in \mathbb{N}^*$. Thus,
\begin{equation} \label{e:2.10}
  S_{p,q}(\phi) \lesssim \|  (1+x^2)^m\,\Lambda_{k, M^{-1}}^{n} \varphi \|_{L_k^2(\mathbb{R})}.
\end{equation}
By combining  \eqref{e:2.9} and \eqref{e:2.10}, we conclude that the family of seminorms $\{ S_{r,p}:r,p\in \mathbb{N}\}$ generates the topology on $\mathcal{S}(\mathbb{R})$.
\end{proof}
\section{Paley-Wiener theorem for \texorpdfstring{$\mathcal{S}(\mathbb{R})$}{\mathcal{S}(\mathbb{R})}} \label{S3}
\par  In 2002, Trim\'eche established the Paley-Wiener theorems for the Dunkl transform and Dunkl translation operators by utilizing results from Dunkl Sobolev spaces. In addition, Chettaoui and Trim\'eche investigated a new type of Paley-Wiener theorem for the Dunkl transform on 
$\mathbb{R}$ \cite{Chettaoui, Trime'Che}. Following the proof techniques of Trim\'eche, we establish the real Paley-Wiener theorem for the linear canonical Dunkl transform.
\begin{theorem} \label{t:4.1}
For all $f \in \mathcal{ S(\mathbb{R})} $, the following limit exists
\begin{equation*}
    \lim_{n \rightarrow \infty} \|  \Lambda_{k, M^{-1}}^nf\|^{\frac{1}{n}}_{L_k^p(\mathbb{R})} = \sigma_f, \quad 1\le p \le \infty,
\end{equation*}
where
\begin{equation} \label{e4.1}
\sigma_f = \sup \,\left\{ \Big|\frac{\lambda}{b}\Big|:  \lambda \in\text{supp}\,(\mathcal{D}_k^M(f)
)\right\}.
\end{equation}
\end{theorem}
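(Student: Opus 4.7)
My plan is to establish matching upper and lower bounds on $\|\Lambda_{k,M^{-1}}^n f\|_{L^p_k(\mathbb{R})}^{1/n}$, using the intertwining relation \eqref{e:2.3} to convert $\Lambda_{k,M^{-1}}^n$ into multiplication by $(i\lambda/b)^n$ on the LCDT side. Writing $h=\mathcal{D}_k^M(f)\in \mathcal{S}(\mathbb{R})$, the case $p=2$ is immediate from Plancherel's formula \eqref{eq :2.1}:
\begin{equation*}
\|\Lambda_{k,M^{-1}}^n f\|_{L^2_k(\mathbb{R})}^2 = \int_{\mathbb{R}}|\lambda/b|^{2n}|h(\lambda)|^2\, d\mu_k(\lambda),
\end{equation*}
and a standard measure-theoretic split at $|\lambda/b|=\sigma_f-\epsilon$ (or at an arbitrary threshold $R$ when $\sigma_f=\infty$) delivers the limit $\sigma_f$ after taking $n$-th roots.

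For the $L^\infty_k$ upper bound I would use the inversion formula \eqref{e:2.2} together with the pointwise estimate $|E_k^{M^{-1}}(\lambda,x)|\le 1$ (the chirp factor in the kernel has unit modulus and the Dunkl kernel itself is bounded by $1$ on the reals), to obtain
\begin{equation*}
\|\Lambda_{k,M^{-1}}^n f\|_{L^\infty_k(\mathbb{R})}\le \frac{\sigma_f^n}{|b|^{k+1}}\,\|h\|_{L^1_k(\mathbb{R})}.
\end{equation*}
Log-convexity $\|\cdot\|_{L^p_k}\le \|\cdot\|_{L^2_k}^{2/p}\|\cdot\|_{L^\infty_k}^{1-2/p}$ then supplies the upper bound for $2\le p\le \infty$. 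For $1\le p<2$, H\"older with the weight $(1+x^2)^{-N}$ gives $\|\Lambda_{k,M^{-1}}^n f\|_{L^p_k}\lesssim \|(1+x^2)^N\Lambda_{k,M^{-1}}^n f\|_{L^2_k}$ for $N$ large; Plancherel converts this to the $L^2_k$ norm of a differential operator of order $2N$ (dual to multiplication by $(1+x^2)^N$ under LCDT) applied to $(i\lambda/b)^n h(\lambda)$, and the Leibniz expansion produces at most a polynomial-in-$n$ factor of degree $2N$ multiplying $\sigma_f^n$, which is absorbed by the $n$-th root.

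For the matching lower bound at arbitrary $p$, I would test by Parseval \eqref{eq3.3} against a carefully chosen $g\in \mathcal{S}(\mathbb{R})$:
\begin{equation*}
\|\Lambda_{k,M^{-1}}^n f\|_{L^p_k}\ge \frac{1}{\|g\|_{L^{p'}_k}}\,\left|\int_{\mathbb{R}}\left(\frac{i\lambda}{b}\right)^n h(\lambda)\,\overline{\mathcal{D}_k^M(g)(\lambda)}\, d\mu_k(\lambda)\right|.
\end{equation*}
Fix $\epsilon>0$ and pick $\lambda_0\in\mathrm{supp}(h)$ with $|\lambda_0/b|\ge \sigma_f-\epsilon$. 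Design $g$ so that $\mathcal{D}_k^M(g)=\overline{h}\,\chi_\delta$ for a smooth non-negative bump $\chi_\delta$ supported in a small interval $U\subset (\lambda_0-\delta,\lambda_0+\delta)$ lying entirely in $\{\lambda>0\}$ or in $\{\lambda<0\}$. The phase $i^n(\lambda/b)^n$ is then of constant sign on $U$, the integrand collapses to $(|\lambda|/|b|)^n\,|h|^2\,\chi_\delta\ge 0$, and the modulus of the integral is bounded below by $(|\lambda_0/b|-\delta/|b|)^n\int_U|h|^2\chi_\delta\, d\mu_k$, with the inner integral strictly positive since $\lambda_0\in\mathrm{supp}(h)$. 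Passing to $n$-th roots and letting $\delta,\epsilon\to 0$ gives $\liminf\ge \sigma_f$, matching the upper bound.

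The main obstacle I anticipate is the $p\in[1,2)$ upper bound, where explicit identification of the LCDT-side dual of $(1+x^2)^N$ is complicated by the Gaussian chirp $e^{\frac{i}{2}(a/b)x^2}$ built into the LCDT kernel (the Fourier correspondence $x^2\leftrightarrow -\partial_\lambda^2$ picks up extra first- and zeroth-order corrections). The cleanest resolution is Remark~\ref{r:3.1}$(ii)$, which iterates to $\Lambda_{k,M^{-1}}^n f(x)=e^{-\frac{i}{2}(a/b)x^2}\Lambda_k^n \tilde f(x)$ with $\tilde f(x)=e^{\frac{i}{2}(a/b)x^2}f(x)\in \mathcal{S}(\mathbb{R})$; the unit-modulus prefactor gives the identity $\|\Lambda_{k,M^{-1}}^n f\|_{L^p_k}=\|\Lambda_k^n \tilde f\|_{L^p_k}$, and Remark~\ref{r:3.1}$(i)$ identifies $\sigma_f$ with the analogous Dunkl-side supremum for $\tilde f$. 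The entire theorem then reduces to the known real Paley-Wiener theorem for the Dunkl transform, bypassing the weighted $L^2$ detour entirely.
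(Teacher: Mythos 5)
Your proposal is correct, but it departs from the paper's argument at three points, and the comparison is worth recording. For the upper bound when $2\le p\le\infty$ the paper applies the Hausdorff--Young inequality to $\mathcal{D}_k^{M^{-1}}\bigl((i\lambda/b)^n\mathcal{D}_k^M f\bigr)$, whereas you use only the trivial $L^1_k\to L^\infty_k$ bound from the inversion formula (the kernel having unit modulus) plus log-convexity of the $L^p_k$ norms; both work, and yours needs less machinery. For the lower bound the paper bootstraps from the $p=2$ case by skew-adjointness, testing $\Lambda_{k,M^{-1}}^{2n}f$ and $\Lambda_{k,M^{-1}}^{2n+1}f$ against $f$ itself and handling even and odd indices separately; you instead test against a designed $g$ with $\mathcal{D}_k^M(g)$ equal to a bump times $\mathcal{D}_k^M(f)$ localized near the edge of the spectrum, which avoids the parity bookkeeping at the cost of invoking that $\mathcal{D}_k^M$ is an isomorphism of $\mathcal{S}(\mathbb{R})$ (note the small conjugation slip: you want $\mathcal{D}_k^M(g)=h\,\chi_\delta$, not $\overline{h}\,\chi_\delta$, so that the integrand becomes $(i\lambda/b)^n|h|^2\chi_\delta$). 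Most significantly, for $1\le p<2$ the paper goes through the weighted-$L^2$ seminorm equivalence of Theorem~\ref{t3.8}, while you observe that Remark~\ref{r:3.1} gives $\|\Lambda_{k,M^{-1}}^n f\|_{L^p_k}=\|\Lambda_k^n\tilde f\|_{L^p_k}$ with $\tilde f=e^{\frac{i}{2}\frac{a}{b}x^2}f$ and identifies $\sigma_f$ with the Dunkl-side spectral radius of $\tilde f$, so the whole theorem follows from Trim\'eche's real Paley--Wiener theorem for the Dunkl transform. That reduction is legitimate and is in fact the shortest complete proof; the paper's self-contained route has the advantage of not presupposing the Dunkl result and of exercising the Sobolev-space apparatus it develops, but your observation shows the theorem is, at bottom, a chirp conjugate of the known one.
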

\begin{proof}
Suppose $\sigma_f = 0$. This implies $ \text{supp}\,(\mathcal{D}_k^M(f))=\{0\}$, and for every non zero $\lambda\in \mathbb{R}$, we have $\mathcal{D}_k^M(f)(\lambda) = 0.$
From \eqref{e:2.2}, we deduce  $f=0$ a.e. Thus, 
\begin{equation*}
    \lim_{n \rightarrow \infty} \| \Lambda_{k, M^{-1}}^nf\|^{\frac{1}{n}}_{L_k^p(\mathbb{R})} = 0.
\end{equation*}
First, we establish the result for the case $p = 2$, while assuming $0 < \sigma_f \leq \infty $.  
It is evident that for all $n \in \mathbb{N}^* $, $\Lambda_{k, M^{-1}}^n f \in \mathcal{S}(\mathbb{R})$, for $f \in \mathcal{S}(\mathbb{R})$. Moreover, it is known that the operator $\mathcal{D}_k^M $ is a topological isomorphism from $ \mathcal{S}(\mathbb{R})$ onto itself. Thus, by using \eqref{eq :2.1} and \eqref{e:2.3}, we obtain the following:
\begin{eqnarray*}
\|\Lambda_{k, M^{-1}}^n (f) \|^2_{L_k^2(\mathbb{R})}& = &\| \mathcal{D}_k^M(\Lambda_{k, M^{-1}}^n (f) )\|^2_{L_k^2(\mathbb{R})}\\
&=& \int_{\mathbb{R}} \left| \frac{\lambda}{b} \right|^{2n}\, |\mathcal{D}_k^M(f)(\lambda)|^2\, d\mu_k(\lambda).
\end{eqnarray*}
If  $\sigma_f = \infty$, then for every $N \in \mathbb{N}$ we have
\begin{eqnarray*}
\int_{\mathbb{R}} \left| \frac{\lambda}{b} \right|^{2n}\,|\mathcal{D}_k^M(f)(\lambda)|^2\,d\mu_k(\lambda) &\ge& \int_N^ \infty  \left| \frac{\lambda}{b} \right|^{2n}\,|\mathcal{D}_k^M(f)(\lambda)|^2\,d\mu_k(\lambda) \\
&\ge& \left| \frac{N}{b} \right|^{2n}\,\int_N^ \infty |\mathcal{D}_k^M(f)(\lambda)|^2 \,d\mu_k(\lambda)\\
\|\Lambda_{k, M^{-1}}^n(f)\|_{L_k^2(\mathbb{R})}^ {\frac{1}{n}} &\ge&  \frac{N}{|b|} \left( \int_N^ \infty |\mathcal{D}_k^M(f)(\lambda)|^2 \,d\mu_k(\lambda)\right)^{\frac{1}{2n}}\\
\lim_{n \rightarrow \infty} \|\Lambda_{k, M^{-1}}^n(f)\|_{L_k^2(\mathbb{R})}^ {\frac{1}{n}} &\ge& \frac{N}{|b|}\lim_{n \rightarrow \infty} \left( \int_N^ \infty |\mathcal{D}_k^M(f)(\lambda)|^2 \,d\mu_k(\lambda)\right)^{\frac{1}{2n}}\\
\lim_{n \rightarrow \infty} \|\Lambda_{k, M^{-1}}^n(f)\|_{L_k^2(\mathbb{R})}^ {\frac{1}{n}} &\ge &  \frac{N}{|b|}, \quad \forall\,\, N\in \mathbb{N}.
\end{eqnarray*}
It is clear that 
\begin{equation*}
\lim_{n \rightarrow \infty} \|\Lambda_{k, M^{-1}}^n(f)\|_{L_k^2(\mathbb{R})}^ {\frac{1}{n}} = \infty.    
\end{equation*}
If $\sigma_f \in (0, \infty)$, then
\begin{eqnarray*}
\|\Lambda_{k, M^{-1}}^n(f)\|^2_{L_k^2(\mathbb{R})} &=& \int_{\mathbb{R}}  \left| \frac{\lambda}{b} \right|^{2n}\,|\mathcal{D}_k^M(f)(\lambda)|^2\, d\mu_k(\lambda)\\
&\le& \sigma_f^{2n} \int_{\mathbb{R}} |\mathcal{D}_k^M(f)(\lambda)|^2\,d\mu_k(\lambda)\\
&=&  \sigma_f^{2n} \, \|\mathcal{D}_k^M(f)\|^2_{L_k^2(\mathbb{R})}\\
\|\Lambda_{k, M^{-1}}^n(f)\|_{L_k^2(\mathbb{R})}^{\frac{1}{n}}&\le& \sigma_f \|f\|^{\frac{1}{n}}_{L_k^2(\mathbb{R})}.
\end{eqnarray*}
Taking limit on both sides we deduce that
\begin{equation} \label{e:3.1}
\lim_{n \rightarrow \infty}\, \text{sup}\, \|\Lambda_{k, M^{-1}}^n(f)\|_{L_k^2(\mathbb{R})}^{\frac{1}{n}} \le \sigma_f.
\end{equation}
On the other hand, let $\epsilon>0$ such that $0<\epsilon < \sigma_f$. 
\begin{eqnarray}
\nonumber\|\Lambda_{k, M^{-1}}^n(f)\|^2_{L_k^2(\mathbb{R})} &=& \int_0^\infty  \left| \frac{\lambda}{b} \right|^{2n}\, \left( |\mathcal{D}_k^M(f)(\lambda)|^2+|\mathcal{D}_k^M(f)(-\lambda)|^2\right)\, d\mu_k(\lambda)\\
\nonumber&\ge& \int_{|b|(\sigma_f-\epsilon)}^{|b|\sigma_f}  \left| \frac{\lambda}{b} \right|^{2n}\,\left( |\mathcal{D}_k^M(f)(\lambda)|^2+|\mathcal{D}_k^M(f)(-\lambda)|^2\right)\, d\mu_k(\lambda)\\
\nonumber&\ge& (\sigma_f-\epsilon)^{2n} \int_{\sigma_f -\epsilon}^{\sigma_f}\left( |\mathcal{D}_k^M(f)(\lambda)|^2+|\mathcal{D}_k^M(f)(-\lambda)|^2\right)\, d\mu_k(\lambda)\\
\lim_{n \rightarrow \infty}\, \text{inf}\, \|\Lambda_{k, M^{-1}}^n(f)\|^{\frac{1}{n}}_{L_k^2(\mathbb{R})} &\ge& \sigma_f - \epsilon. \label{e:3.2}
\end{eqnarray}
By combining \eqref{e:3.1} and \eqref{e:3.2}, we conclude that 
\begin{equation} \label{e:3.3}
 \lim_{n \rightarrow \infty}\,  \|\Lambda_{k, M^{-1}}^n(f)\|_{L_k^2(\mathbb{R})}^{\frac{1}{n}} = \sigma_f.   
\end{equation}
Next, we vary $p\in(2, \infty)$ and $\sigma_f \in (0, \infty)$. In this case, we first prove the following inequality
\begin{equation*}
    \lim_{n \rightarrow \infty}\,  \|\Lambda_{k, M^{-1}}^n(f)\|_{L_k^p(\mathbb{R})}^{\frac{1}{n}} \le \sigma_f.
\end{equation*}
Using \eqref{e:2.3} and \eqref{e:2.2}, we obtain that $\Lambda_{k, M^{-1}}^n(f) = \mathcal{D}_k^{M^{-1}}\left( (\frac{i\lambda}{b})^n\,  \mathcal{D}_k^M(f)\right)$. Then applying Young's inequality \eqref{eq:2.3} we have 
\begin{eqnarray}
\nonumber\|\Lambda_{k, M^{-1}}^n(f)\|_{L_k^p(\mathbb{R})} &=&  \| \mathcal{D}_k^{M^{-1}}( (i\lambda/b)^n \mathcal{D}_k^Mf) \|_{L_k^p(\mathbb{R})} \\
\nonumber&\le& C_{b,p} \,\| (i\lambda/b)^n\, \mathcal{D}_k^M(f) \|_{L_k^q(\mathbb{R})}\\
\nonumber&\le& C_{b,p} \,\left(\int_{-|b|\sigma_f}^{|b|\sigma_f} |\lambda/b|^{qn}\,|\mathcal{D}_k^M(f)(\lambda)|^q\, d\mu_k(\lambda)\right)^{\frac{1}{q}}\\
\nonumber&\le& C_{b,p}\, \sigma_f^n\, \|\mathcal{D}_k^M(f)\|_{L_k^q(\mathbb{R})}.
\end{eqnarray}
Thus,
\begin{equation}
\lim_{n \rightarrow \infty}\,  \|\Lambda_{k, M^{-1}}^n(f)\|_{L_k^p(\mathbb{R})}^{\frac{1}{n}} \le \sigma_f. \label{e:3.4} 
\end{equation}
Now, we take $p\in [1,2)$ and 
  $\sigma_f \in (0, \infty)$. In this case, we prove that
\begin{equation*}
    \lim_{n \rightarrow \infty}\, \text{sup}\, \|\Lambda_{k, M^{-1}}^n(f)\|_{L_k^p(\mathbb{R})}^{\frac{1}{n}} \le \sigma_f.
\end{equation*}
Let us consider
\begin{eqnarray*}
\|f\|_{L_k^p(\mathbb{R})}^p &=& \int_{\mathbb{R}} |f(\lambda)|^p\, d\mu_k(\lambda)\\
&=& \int_{\mathbb{R}} (1+\lambda^2)^{-lp}\,(1+\lambda^2)^{lp}\,|f(\lambda)|^p\, d\mu_k(\lambda)
\end{eqnarray*}
where $l\in \mathbb{N}$ such that $l>(k+1) (\frac{2}{p}-1)$. By applying H\"older's inequality, we obtain that
\begin{eqnarray} \label{eq:3.3}
    \|f\|^p_{L_k^p(\mathbb{R})} &\le& \|(1+\lambda^2)^{-lp}\|_{L_k^{\frac{2}{2-p}}(\mathbb{R})}\, \|(1+\lambda^2)^l\,f\|^p_{L_k^2(\mathbb{R})}.
\\
\nonumber\|f\|_{L_k^p(\mathbb{R})}&\lesssim& \|(1+\lambda^2)^l\,f\| _{L_k^2(\mathbb{R})}.    
\end{eqnarray}
From \eqref{e:2.2}, we have 
\begin{eqnarray*}
 \|(1+\lambda^2)^l\,f\| _{L_k^2(\mathbb{R})} = \| (1+\lambda^2)^l\,\mathcal{D}_k^{M^{-1}}(\mathcal{D}_k^Mf)\| _{L_k^2(\mathbb{R})}  
\end{eqnarray*}
Since $f \in \mathcal{S}(\mathbb{R})$ and  applying Theorem \ref{t3.8}  imply that there exist $\beta, \gamma \in \mathbb{N}^*$ such that
\begin{equation}\label{eq:3.4}
\|(1+\lambda^2)^l\,f\|_{L_k^2(\mathbb{R})} \lesssim  \|(1+\lambda^2)^\beta
    \,\Lambda_{k, M^{-1}}^\gamma (\mathcal{D}_k^M f)\|_{L_k^2(\mathbb{R})}.
\end{equation}
Applying equation \eqref{eq:3.4} in \eqref{eq:3.3} becomes
\begin{equation}\label{eq:3.5}
   \|f\|^p_{L_k^p(\mathbb{R})} \lesssim  \,\|(1+\lambda^2)^\beta
    \,\Lambda_{k, M^{-1}}^\gamma( \mathcal{D}_k^Mf)\|_{L_k^2(\mathbb{R})}. 
\end{equation}
 From \eqref{eq:3.5} and \eqref{e:2.3} we deduce that 
\begin{eqnarray*}
\|\Lambda_{k, M^{-1}}^n(f)\|_{L_k^p(\mathbb{R})} &\lesssim&  \|(1+\lambda^2)^\beta
    \, \Lambda_{k, M^{-1}}^\gamma(\mathcal{D}_k^M(\Lambda_{k, M^{-1}}^nf))\|_{L_k^2(\mathbb{R})} \\
    &\lesssim& \|(1+\lambda^2)^\beta
    \,\Lambda_{k, M^{-1}}^\gamma( 
( i\lambda/b)^n \mathcal{D}_k^Mf)\|_{L_k^2(\mathbb{R})}.
\end{eqnarray*}
Using  support of the $\mathcal{D}_k^M$ of Schwartz function $f$ and Proposition \ref{p:2.1}, we obtain that
\begin{equation*}
 \|\Lambda_{k, M^{-1}}^n(f)\|_{L_k^p(\mathbb{R})} \lesssim   \sigma_f.
\end{equation*}
 Therefore we obtain that
\begin{equation} \label{e:3.8}
\lim_{n \rightarrow \infty}\, \text{sup}\, \|\Lambda_{k, M^{-1}}^n(f)\|_{L_k^p(\mathbb{R})}^{\frac{1}{n}} \le \sigma_f.    
\end{equation}
On the other hand side, for $1 \le p < \infty$ and $0 <\sigma_f \le \infty$, we have to prove 
\begin{equation*}
 \lim_{n \rightarrow \infty}\, \|\Lambda_{k, M^{-1}}^n(f)\|_{L_k^p(\mathbb{R})}^{\frac{1}{n}} \ge \sigma_f.   
\end{equation*}
Invoking \eqref{eq:2.4} and H\"older's inequality, we have 
\begin{eqnarray}
 \nonumber\int_{\mathbb{R}} |\Lambda_{k, M^{-1}}^n(f)(\lambda)|^2\, d\mu_k(\lambda) &= &(-1)^n\int_{\mathbb{R}} \Lambda_{k, M^{-1}}^{2n}(f)(\lambda)\, f(\lambda)\, d\mu_k(\lambda)\\
&\le& \|f\|_{L_k^q(\mathbb{R})}\|\Lambda_{k, M^{-1}}^{2n}(f)\|_{L_k^p(\mathbb{R})}.\label{e:4.10}
\end{eqnarray}
Utilizing \eqref{e:3.3} and \eqref{e:4.10}, we deduce the following inequalities
\begin{eqnarray}
\nonumber\sigma_f &=&  \lim_{n \rightarrow \infty}\, \|\Lambda_{k, M^{-1}}^n(f)\|^{\frac{1}{n}}_{L_k^2(\mathbb{R})}\\
\nonumber &\le&  \lim_{n \rightarrow \infty}\,\text{inf}\, \|\Lambda_{k, M^{-1}}^{2n}(f)\|^{\frac{1}{2n}}_{L_k^p(\mathbb{R})}\,  \lim_{n \rightarrow \infty}\,\text{inf}\, \|f\|^{\frac{1}{2n}}_{L_k^q(\mathbb{R})}\\
&\le& \lim_{n \rightarrow \infty}\,\text{inf}\, \|\Lambda_{k, M^{-1}}^{2n}(f)\|^{\frac{1}{2n}}_{L_k^p(\mathbb{R})} .\label{eq:3.7}
\end{eqnarray}
Similarly, for every $n\in \mathbb{N}^*$, we have
\begin{eqnarray}\label{e4.10}
\|\Lambda_{k, M^{-1}}^{n+1}(f)\|^2_{L_k^2(\mathbb{R})} \le \| \Lambda_{k, M^{-1}}(f)\|_{L_k^q(\mathbb{R})}\| \Lambda_{k, M^{-1}}^{2n+1}(f)\|_{L_k^p(\mathbb{R})},
\end{eqnarray}
where $p$ and $q$ are the conjugate exponents. From \eqref{e:3.3}, we deduce
\begin{eqnarray*}
    \sigma_f = \lim_{n \rightarrow \infty}\, \|\Lambda_{k, M^{-1}}^{n+1}(f)\|^{\frac{1}{n+1}}_{L_k^2(\mathbb{R})}
   =\lim_{n \rightarrow \infty}\, \|\Lambda_{k, M^{-1}}^{n+1}(f)\|^{\frac{1}{n+1/2}}_{L_k^2(\mathbb{R})}. 
\end{eqnarray*}
Using the inequality \eqref{e4.10} in the above equation, we obtain that
\begin{equation} \label{eq:3.8}
\sigma_f \le \lim_{n \rightarrow \infty}\,\text{inf}\, \|\Lambda_{k, M^{-1}}^{2n+1}(f)\|^{\frac{1}{2n+1}}_{L_k^p(\mathbb{R})}.
\end{equation}
From  \eqref{eq:3.7} and \eqref{eq:3.8} we deduce that
\begin{equation}\label{eq:3.9}
    \sigma_f \le \lim_{n \rightarrow \infty}\,\text{inf}\, \|\Lambda_{k, M^{-1}}^{n}(f)\|^{\frac{1}{n}}_{L_k^p(\mathbb{R})}.
\end{equation}
By combining the equations \eqref{e:3.4}, \eqref{e:3.8} and \eqref{eq:3.9} we conclude that
\begin{equation*}
 \lim_{n \rightarrow \infty} \| \Lambda_{k, M^{-1}}^nf\|^{\frac{1}{n}}_{L_k^p(\mathbb{R})} = \sigma_f, \qquad \text{where} \,\,\, \sigma_f \in(0, \infty).  
\end{equation*}
If $\sigma_f = \infty$, then it is evident from  \eqref{eq:3.9} that
\begin{equation*}
     \lim_{n \rightarrow \infty} \| \Lambda_{k, M^{-1}}^nf\|^{\frac{1}{n}}_{L_k^p(\mathbb{R})} = \infty.
\end{equation*}
\end{proof}
\section{Real Paley-Wiener type theorems} \label{S5}
Apart from the classical real  Paley-Wiener theorem, over time the researchers have gradually proved the real Paley-Wiener theorem by using various techniques like functions with symmetric body spectrum, compact spectrum, and 
the polynomial domain, etc. In this section, we explore two versions of the real Paley-Wiener theorem for the linear canonical Dunkl transform. The first one is derived using the concept of the polynomial domain, and then we establish the Boas-type theorem for the linear canonical Dunkl transform.

 \subsection{The linear canonical Dunkl transform of functions with polynomial domain support}

Tuan studied the real Paley-Wiener theorem for a differential operator in the context of the classical Fourier transform, where the transform of a function is supported in the polynomial domain \cite{V.K. Tuan}. Following the proof techniques of Tuan's approach, we investigate the real Paley-Wiener theorem for the linear canonical Dunkl operator.
\begin{definition} \label{d5.1}
Let $b\neq 0$ and $P(x)$ be a non-constant polynomial. Then $\Omega_{P,b} :=\{ \lambda \in \mathbb{R}: |P\left(\frac{\lambda}{b}\right)| \le 1 \}$ is called polynomial domain.    
\end{definition}
 
 \begin{theorem} \label{t:5.1}
 The linear canonical Dunkl transform $\mathcal{D}_k^M(f)$ of $f\in \mathcal{S}(\mathbb{R})$   vanishes outside of the polynomial domain $\Omega_{P,b}$ if and only if  
 \begin{equation} \label{e:3.12}
 \limsup\limits_{n \to \infty} \|P^n(i\Lambda_{k, M^{-1}})f\|^{\frac{1}{n}}_{L_k^p(\mathbb{R})} \le 1, \quad \text{
 for}\quad 1\le p \le \infty.
 \end{equation}
 \end{theorem}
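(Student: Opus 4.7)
The plan is to establish the stronger quantitative identity
$$\lim_{n\to\infty}\|P^n(i\Lambda_{k,M^{-1}})f\|^{1/n}_{L_k^p(\mathbb{R})}=\sigma_f^P,\qquad\sigma_f^P:=\sup\{|P(\lambda/b)|:\lambda\in\mathrm{supp}\,\mathcal{D}_k^M(f)\},$$
from which the theorem is immediate because the hypothesis $\mathrm{supp}\,\mathcal{D}_k^M(f)\subset\Omega_{P,b}$ is precisely the condition $\sigma_f^P\le 1$. The key ingredient is the multiplier identity
$$\mathcal{D}_k^M\bigl(P^n(i\Lambda_{k,M^{-1}})f\bigr)(\lambda)=P^n(\lambda/b)\,\mathcal{D}_k^M(f)(\lambda),$$
obtained by iterating \eqref{e:2.3} and extending to polynomials by linearity. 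Once this is in hand, the structure of the argument parallels that of Theorem \ref{t:4.1}, with the monomial $(\lambda/b)^n$ replaced throughout by $P^n(\lambda/b)$.

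The $p=2$ case is the anchor. By Plancherel's formula \eqref{eq :2.1} one has
$$\|P^n(i\Lambda_{k,M^{-1}})f\|^2_{L_k^2}=\int_{\mathbb{R}}|P(\lambda/b)|^{2n}\,|\mathcal{D}_k^M(f)(\lambda)|^2\,d\mu_k(\lambda).$$
Restricting the integral to $\mathrm{supp}\,\mathcal{D}_k^M(f)$ and bounding $|P(\lambda/b)|\le\sigma_f^P$ there yields the upper bound $\sigma_f^P\cdot\|f\|^{1/n}_{L_k^2}\to\sigma_f^P$, while restricting to a set where $|P(\lambda/b)|\ge\sigma_f^P-\epsilon$ carries positive transform mass gives the matching lower bound, just as in Theorem \ref{t:4.1}. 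For $p\in(2,\infty]$, the $\limsup\le\sigma_f^P$ bound follows from the inversion formula \eqref{e:2.2} and the Hausdorff-Young inequality \eqref{eq:2.3}:
$$\|P^n(i\Lambda_{k,M^{-1}})f\|_{L_k^p}=\bigl\|\mathcal{D}_k^{M^{-1}}\bigl(P^n(\lambda/b)\,\mathcal{D}_k^M(f)\bigr)\bigr\|_{L_k^p}\lesssim (\sigma_f^P)^n\,\|\mathcal{D}_k^M(f)\|_{L_k^q},$$
the rightmost norm being finite because $\Omega_{P,b}$ is bounded (since $P$ is non-constant) and $\mathcal{D}_k^M(f)\in\mathcal{S}(\mathbb{R})$. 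For $p\in[1,2)$ I would replicate the Hölder/weighted-$L_k^2$ reduction of Theorem \ref{t:4.1}: bound $\|g\|_{L_k^p}\lesssim\|(1+\lambda^2)^l g\|_{L_k^2}$ for $l>(k+1)(\tfrac{2}{p}-1)$, then apply Theorem \ref{t3.8} together with Proposition \ref{p:2.1} to $g=P^n(i\Lambda_{k,M^{-1}})f$, so that the pointwise growth of $P^n(\lambda/b)$ on the (bounded) support controls the final seminorm by $(\sigma_f^P)^n$ up to a factor depending only on $f$.

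The matching $\liminf\ge\sigma_f^P$ direction exploits the antisymmetry \eqref{eq:2.4}. We may take $P$ to have real coefficients (the condition $|P(\lambda/b)|\le 1$ is unchanged under $P\mapsto\bar P$), whence $P^n(i\Lambda_{k,M^{-1}})$ is self-adjoint in the $L_k^2$ pairing; Hölder's inequality then yields
$$\|P^n(i\Lambda_{k,M^{-1}})f\|^2_{L_k^2}=\Bigl|\int_{\mathbb{R}} f\,\overline{P^{2n}(i\Lambda_{k,M^{-1}})f}\,d\mu_k\Bigr|\le\|f\|_{L_k^q}\,\|P^{2n}(i\Lambda_{k,M^{-1}})f\|_{L_k^p}.$$
Taking $(2n)$-th roots and invoking the $L_k^2$ case forces $\sigma_f^P\le\liminf_n\|P^{2n}(i\Lambda_{k,M^{-1}})f\|^{1/(2n)}_{L_k^p}$; the analogous inequality with $P^{n+1}$ on the left and $P(i\Lambda_{k,M^{-1}})f$, $P^{2n+1}(i\Lambda_{k,M^{-1}})f$ on the right takes care of odd indices, as in the final stage of Theorem \ref{t:4.1}.

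The main obstacle I anticipate is the $p\in[1,2)$ upper bound: invoking Proposition \ref{p:2.1} inside Theorem \ref{t3.8} for the polynomial $P^n$ rather than a single power $\Lambda^n$ generates many combinatorial pre-factors and auxiliary evaluation points $\xi_j$, and one must verify that all of these contribute only subexponentially in $n$ so that the $1/n$-th root still converges to $\sigma_f^P$. A secondary technical point, the reduction to real-coefficient $P$, is justified by noting that $|P(\lambda/b)|^2=(P\bar P)(\lambda/b)$ is a real polynomial in $\lambda/b$ cutting out the same sublevel set as $\{|P|\le 1\}$.
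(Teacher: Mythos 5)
Your proposal is correct and rests on exactly the same technical ingredients as the paper's proof (the multiplier identity obtained from \eqref{e:2.3}, Plancherel at $p=2$, Hausdorff--Young \eqref{eq:2.3} for the upper bounds, the skew-symmetry \eqref{eq:2.4} plus H\"older duality for the lower bounds, and the weighted-$L_k^2$/H\"older reduction for $p\in[1,2)$), but it is organized around a genuinely different pivot: you prove the quantitative identity $\lim_{n}\|P^n(i\Lambda_{k,M^{-1}})f\|^{1/n}_{L_k^p(\mathbb{R})}=\sigma_f^P$ and read off both implications, so the forward direction comes from the lower bound $\liminf\ge\sigma_f^P$. The paper never states this identity; its forward direction is instead a localization argument --- fix $\lambda_0\notin\Omega_{P,b}$, find a neighbourhood $B_{\lambda_0}$ on which $|P(\lambda/b)|\ge\tfrac{1}{2}(1+|P(\lambda_0/b)|)>1$, and conclude from Hausdorff--Young that $\int_{B_{\lambda_0}}|\mathcal{D}_k^M(f)|^q\,d\mu_k=0$. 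Your route yields a sharper conclusion (the exact exponential growth rate, in the spirit of Theorem \ref{t:4.1}), at the cost of needing the lower bound for every $p$; the paper's localization avoids that in the range $1\le p\le 2$. Two caveats. First, your ``reduction to real coefficients'' is not quite a reduction: the adjoint of $P^n(i\Lambda_{k,M^{-1}})$ is $\bar P^n(i\Lambda_{k,M^{-1}})$, so for complex $P$ the duality pairing produces $(P\bar P)^n(i\Lambda_{k,M^{-1}})f$ rather than $P^{2n}(i\Lambda_{k,M^{-1}})f$, while the theorem's norm involves $P^n$ itself; you should either assume $P$ real (as the paper implicitly does when it writes $\int f\,P^{2n}(i\Lambda_{k,M^{-1}})\overline{f}\,d\mu_k$) or carry $(P\bar P)^n$ through and compare the two norms. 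Second, the combinatorial control you flag for the $p\in[1,2)$ upper bound is a real issue, but the paper's own treatment (the factor $n\,C(a,b)$ and the undisplayed auxiliary function $\Phi_n$ supported in $\mathrm{supp}\,\mathcal{D}_k^M(f)$) is no more detailed than your sketch, so this is not a gap relative to the source.
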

 \begin{proof}
 The proof will be divided into two cases. We proceed with the first case by assuming $p$ varies from 1 to 2. Suppose $f\in  \mathcal{S}(\mathbb{R})$, the inequality \eqref{e:3.12}  holds. Let us prove  $$\text{supp}\, \mathcal{D}_k^Mf \subset \Omega_{P,b}.$$ We begin the proof by assuming $\lambda \notin\Omega_{P,b}$. Using the relation \eqref{e:2.3} and Hausdorff-Young’s inequality, we obtain the following: 
 \begin{eqnarray}
\nonumber\left\|P^n(\lambda/b)\,\mathcal{D}_k^M(f) \right\|_{L_k^q(\mathbb{R})} &=& \|\mathcal{D}_k^M(P^n(i\Lambda_{k, M^{-1}})f)\|_{L_k^q(\mathbb{R})}\\
 \nonumber&\le&   \frac{1}{\left(|b|^{k+1}\right)^{1-\frac{2}{q}}} \,\| P^n(i\Lambda_{k, M^{-1}})f\|_{L_k^p(\mathbb{R})}  \\
 \limsup\limits_{n \to \infty}\left\|P^n\left(\lambda/b\right)\,\mathcal{D}_k^M(f) \right\|^{\frac{1}{n}}_{L_k^q(\mathbb{R})} &\le&  1. \label{e:3.13}
 \end{eqnarray}
 Fix $\lambda_0 \notin \Omega_{P,b}$. By Definition \ref{d5.1}, it is clear that $P\left(\lambda_0/b\right)\ge1$, for some neighborhood around $\lambda_0$ (say  $B_{\lambda_0}$). Therefore, for any $\lambda \in  B_{\lambda_0}$, we have 
 \begin{equation} \label{e:3.14}
 |P(\lambda/b)| \ge \frac{1+|P\left(\lambda_0/b\right)|}{2}. 
 \end{equation}
 We proceed the calculation by using  \eqref{e:3.13} and \eqref{e:3.14} in the following manner:
 \begin{eqnarray*}
    1&\ge& \limsup\limits_{n \to \infty}\left\|P^n\left(\lambda/b\right)\,\mathcal{D}_k^M(f) \right\|^{\frac{1}{n}}_{L_k^q(\mathbb{R})} \\
    &\ge&\limsup\limits_{n \to \infty} \left(\int_{B_{{\lambda}_0}} |P^n\left(\lambda/b\right)|^q \,|\mathcal{D}_k^M(f)|^q\,d\mu_k(\lambda)\right)^{\frac{1}{nq}}\\
    &\ge& \frac{1+P\left(\lambda_0/b\right)}{2} \limsup\limits_{n \to \infty} \left(\int_{B_{{\lambda}_0}} |\mathcal{D}_k^M(f)|^q\,d\mu_k(\lambda)\right)^{\frac{1}{nq}}.
 \end{eqnarray*}
 It is clear from \eqref{e:3.14} that $\frac{1+P\left(\lambda_0/b\right)}{2} >1 $. Therefore,
\begin{eqnarray*}
\limsup\limits_{n \to \infty} \left(\int_{B_{{\lambda}_0}} |\mathcal{D}_k^M(f)(\lambda)|^q\,d\mu_k(\lambda)\right)^{\frac{1}{nq}} =0.
 \end{eqnarray*}
 This implies $\lambda_0 \notin \text{supp}\, \mathcal{D}_k^Mf $. Conversely,  assume that supp$\mathcal{D}_k^M(f) \subset\Omega_{P,b}$. For suitable $p$, we consider
 \begin{eqnarray*}
     \|f\|^p_{L_k^p(\mathbb{R})} = \int_{\mathbb{R}}\,(1+|\lambda/b|)^{-p}\,(1+|\lambda/b|)^p|f|^{p}\,d\mu_k(\lambda).
 \end{eqnarray*}
Using H\"older's inequality to the above identity, we deduce that
\begin{eqnarray*}
\|f\|^p_{L_k^p(\mathbb{R})} &\
\le& \|(1+|\lambda/b|)^{-p}\|_{\frac{2}{2-p}}\|(1+|\lambda/b|)f\|^p_{L_k^2(\mathbb{R})}  \\
&=& C_b\,\|(1+\lambda/b)f\|^p_{L_k^2(\mathbb{R})}. 
\end{eqnarray*}
We now apply the above argument for 
$P^n(i\Lambda_{k, M^{-1}})f$ and invoking \eqref{e:2.3}, we have
\begin{eqnarray*}
 \|P^n(i\Lambda_{k, M^{-1}})f\|_{L_k^p(\mathbb{R})} &\le& C_b^{\frac{1}{p}}\,\|(1+\lambda/b)P^n(i\Lambda_{k, M^{-1}})f\|_{L_k^2(\mathbb{R})}\\
  \|P^n(i\Lambda_{k, M^{-1}})f\|_{L_k^p(\mathbb{R})} &\le& C_b^{\frac{1}{p}}\,\|(I+ i\Lambda_{k, M^{-1}})\,P^n(\lambda/b)\,\mathcal{D}_k(f)\|_{L_k^2(\mathbb{R})}.
\end{eqnarray*}
Exploiting the definition of $\Lambda_{k, M^{-1}}$ and the assumption on the support of $\mathcal{D}_k^M(f)$, we obtain the following inequality:
\begin{eqnarray*}
    \|P^n(i\Lambda_{k, M^{-1}})f\|_{L_k^p(\mathbb{R})}\le n\, C(a,b)\,\|P^{n-1} (\lambda/b)\, \Phi_n\|_{L_k^2(\mathbb{R})} , 
\end{eqnarray*}
where supp $\Phi_n \subset$ supp $\mathcal{D}_k^M(f)$, and  $C(a,b)$ is a constant.
There exists a suitable constant $c'$, independent of $n$,  such that 
\begin{eqnarray*}
     \|P^n(i\Lambda_{k, M^{-1}})f\|_{L_k^p(\mathbb{R})}^{\frac{1}{n}}\le (n\,  C(a,b)\,c')^{\frac{1}{n}}.
\end{eqnarray*}
Taking the limit supremum as 
$n \to \infty$ we conclude that
\begin{eqnarray*}
  \limsup\limits_{n \to \infty}   \|P^n(i\Lambda_{k, M^{-1}})f\|_{L_k^p(\mathbb{R})}^{\frac{1}{n}}\le 1.
\end{eqnarray*}
In the second part of the proof, we consider the case $2<p \le \infty$ and assume that  \eqref{e:3.12} holds.  Doing some manipulation and applying the H\"older's inequality we derive the following inequality:
\begin{eqnarray*}
\|P^n(i\Lambda_{k, M^{-1}})f\|^2_{L_k^2(\mathbb{R})} &=&  \int_{\mathbb{R}} P^n(i\Lambda_{k, M^{-1}})f (x)\, P^n(i\Lambda_{k, M^{-1}}) \overline{f(x)}\, d\mu_k(x)\\
&=& \int_{\mathbb{R}} f(x)\, P^{2n}(i\Lambda_{k, M^{-1}})\overline{f(x)}\,d\mu_k(x)\\
&\le& \|f\|_{L_k^q(\mathbb{R})} \|P^{2n}(i\Lambda_{k, M^{-1}})f\|_{L_k^p(\mathbb{R})},
\end{eqnarray*}
where $q$ is the conjugate exponent of $p$. Taking the limit supremum on both sides, we have
\begin{eqnarray*}
 \limsup\limits_{n \to \infty} \|P^n(i\Lambda_{k, M^{-1}})f\|_{L_k^2(\mathbb{R})}^{\frac{1}{n}} &\le&  \limsup\limits_{n \to \infty}  \|f\|_{L_k^q(\mathbb{R})}^{\frac{1}{2n}}\, \|P^{2n}(i\Lambda_{k, M^{-1}})f\|_{L_k^p(\mathbb{R})}^{\frac{1}{2n}}\\
 &\le& 1.
 \end{eqnarray*}
For $p=2$, it is evident from the first part of the proof that the support of  $\mathcal{D}_k^M(f)$ is contained in $\Omega_{P,b}$.
 \newline Conversely let us assume that supp $\mathcal{D}_k^M(f) \subset \Omega_{P,b}$. We consider
 \begin{eqnarray*}
    \|P^n(i\Lambda_{k, M^{-1}})\mathcal{D}_k^{M^{-1}}(\mathcal{D}_k^Mf)\|_{L_k^p(\mathbb{R})}&=& \| \mathcal{D}_k^{M^{-1}}(P^n(\lambda/b)\,\mathcal{D}_k^{M}(f))\|_{L_k^p(\mathbb{R})}.
\end{eqnarray*}
Applying Hausdorff-Young's inequality \eqref{eq:2.3}, we will obtain the required result
\begin{eqnarray*}
  \|P^n(i\Lambda_{k, M^{-1}})f\|_{L_k^p(\mathbb{R})}  &\le& \frac{1}{\left(|b|^{k+1}\right)^{1-\frac{2}{p}}}\, \|P^n(\lambda/b)\,\mathcal{D}_k^{M}(f)\|_{L^q_k(\mathbb{R})}\\
 \limsup\limits_{n \to \infty}\|P^n(i\Lambda_{k, M^{-1}})f\|_{L_k^p(\mathbb{R})} ^{\frac{1}{n}} &\le&  \limsup\limits_{n \to \infty} \left( \frac{1}{\left(|b|^{k+1}\right)^{1-\frac{2}{p}}}\right)^{\frac{1}{n}}\, \|\mathcal{D}_k^{M}(f)\|_{L^q_k(\mathbb{R})}^{\frac{1}{n}}\\
 &\le& 1.
 \end{eqnarray*}
This completes the proof. 
\end{proof}
The above theorem characterizes a function $f$ whose linear canonical Dunkl transform is supported within the polynomial domain. Setting $P(x) = x^2$, yields $P(i\Lambda_{k, M^{-1}}) = - \Delta_{k,M^{-1}}$, 
which leads to the following corollary.  Moreover, this corollary is a real version of the Paley-Wiener theorem for the linear canonical Dunkl Laplacian. 
\begin{corollary}
Let $f \in \mathcal{S}(\mathbb{R})$ and   support of $\mathcal{D}_k^M(f)$ is  compact if and only if 
\begin{equation*}
   \lim_{n \to \infty} \|  \Delta_{k,M^{-1}}^nf\|^{\frac{1}{n}}_{L_k^p(\mathbb{R})} < \infty.
\end{equation*}
\end{corollary}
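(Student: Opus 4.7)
My plan is to derive this corollary as an immediate consequence of Theorem \ref{t:4.1}, exploiting the identity $\Delta_{k,M^{-1}}^{n} = \Lambda_{k,M^{-1}}^{2n}$ that follows from the definition $\Delta_{k,M} = \Lambda_{k,M}^{2}$. Taking $2n$-th roots rather than $n$-th roots, this identity rewrites as
\[
\|\Delta_{k,M^{-1}}^{n} f\|_{L_k^p(\mathbb{R})}^{1/n} = \bigl(\|\Lambda_{k,M^{-1}}^{2n} f\|_{L_k^p(\mathbb{R})}^{1/(2n)}\bigr)^{2},
\]
so applying Theorem \ref{t:4.1} along the subsequence of even indices, whose limit must coincide with the full-sequence limit $\sigma_f$, I will obtain
\[
\lim_{n\to\infty} \|\Delta_{k,M^{-1}}^{n} f\|_{L_k^p(\mathbb{R})}^{1/n} = \sigma_f^{2},
\]
where $\sigma_f = \sup\{|\lambda/b| : \lambda \in \text{supp}(\mathcal{D}_k^M f)\}$ as in \eqref{e4.1}.

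With this identification, the equivalence reduces to a one-line argument in each direction. In the forward direction, if $\text{supp}(\mathcal{D}_k^M f)$ is compact then it is bounded, so $\sigma_f < \infty$ and the limit is finite. Conversely, a finite limit forces $\sigma_f < \infty$, which in turn confines $\text{supp}(\mathcal{D}_k^M f)$ inside the bounded interval $[-|b|\sigma_f,\,|b|\sigma_f]$; since the support of a tempered distribution is closed in $\mathbb{R}$, this bounded closed set is compact.

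I expect no substantive obstacle; the only bookkeeping is noting that Theorem \ref{t:4.1} gives convergence of the full sequence, so restriction to even indices is automatic and yields exactly $\sigma_f^{2}$. As an alternative route, one could apply Theorem \ref{t:5.1} with the rescaled polynomial $P(x) = (x/c)^{2}$ and let $c>0$ vary so that $\Omega_{P,b} = \{|\lambda|\le |b|c\}$ exhausts every compact set, giving the same conclusion; however, passing through Theorem \ref{t:4.1} is cleaner because it supplies the precise value of the limit rather than only its finiteness.
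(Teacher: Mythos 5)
Your proposal is correct, but it reaches the corollary by a different route than the paper. The paper obtains the corollary as a specialization of Theorem \ref{t:5.1}: setting $P(x)=x^{2}$ gives $P(i\Lambda_{k,M^{-1}})=-\Delta_{k,M^{-1}}$, so $\|P^{n}(i\Lambda_{k,M^{-1}})f\|=\|\Delta_{k,M^{-1}}^{n}f\|$, and the compactness statement follows by letting the polynomial domain $\Omega_{P,b}$ vary over $\{|\lambda|\le |b|c\}$ via the rescaling $P(x)=(x/c)^{2}$ --- exactly the ``alternative route'' you sketch at the end. You instead go through Theorem \ref{t:4.1} and the identity $\Delta_{k,M^{-1}}^{n}=\Lambda_{k,M^{-1}}^{2n}$, passing to the even-index subsequence of a convergent sequence to get $\lim_{n\to\infty}\|\Delta_{k,M^{-1}}^{n}f\|_{L_k^p(\mathbb{R})}^{1/n}=\sigma_f^{2}$, and then reading off finiteness of the limit against boundedness (hence, since supports are closed, compactness) of $\mathrm{supp}\,\mathcal{D}_k^M(f)$. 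Your route buys strictly more: it shows the limit in the corollary actually \emph{exists} and identifies its value as $\sigma_f^{2}$, which better matches the corollary's use of $\lim$ rather than $\limsup$, whereas the paper's route through Theorem \ref{t:5.1} only controls a $\limsup$ against a one-parameter family of thresholds. The only bookkeeping point worth stating explicitly --- and you do state it --- is that Theorem \ref{t:4.1} is valid with $\sigma_f=\infty$ included, so the subsequence argument covers both directions of the equivalence at once.
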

\subsection{Boas-type Paley-Wiener theorem}
Boas initially characterized $L^2$
  functions that vanish on a closed interval \cite{Boas}. Later, Tuan and Zayed establish Boas's theorem for a class of integral transforms \cite{Zayed}. We extended the Boas-type Paley-Wiener theorem for the linear canonical Dunkl transform. To facilitate its proof, we first recall the following lemma.
\begin{lemma} \label{l6.1}
If $g$ belongs to $L^p(U, m)$ where $U\subset \mathbb{R}$ and  $m$ is a Lebesgue measure on $U$ with $m(U) =1$, then 
\begin{equation*}
\lim_{p \to \infty} \|g\|_{L^p(U,\,m)} = \|g\|_{L^\infty(U,\,m)}.
\end{equation*}
\end{lemma}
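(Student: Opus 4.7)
The plan is to prove the two one-sided inequalities
\[
\limsup_{p\to\infty} \|g\|_{L^p(U,m)} \le \|g\|_{L^\infty(U,m)} \quad\text{and}\quad \liminf_{p\to\infty} \|g\|_{L^p(U,m)} \ge \|g\|_{L^\infty(U,m)}
\]
separately, which together give the claimed limit. Throughout I would write $\|g\|_\infty := \|g\|_{L^\infty(U,m)}$ for brevity and first treat the case $\|g\|_\infty < \infty$.

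For the upper bound I would use the pointwise a.e.\ inequality $|g(x)| \le \|g\|_\infty$ together with the normalization $m(U)=1$. This immediately yields
\[
\|g\|_{L^p(U,m)}^p = \int_U |g(x)|^p \, dm(x) \le \|g\|_\infty^p \, m(U) = \|g\|_\infty^p,
\]
so that $\|g\|_{L^p(U,m)} \le \|g\|_\infty$ for every $p$, whence the desired bound on the $\limsup$.

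For the lower bound I would fix an arbitrary $\varepsilon \in (0, \|g\|_\infty)$ and consider the set $A_\varepsilon := \{x \in U : |g(x)| > \|g\|_\infty - \varepsilon \}$. By the very definition of the essential supremum, $m(A_\varepsilon) > 0$, and of course $m(A_\varepsilon) \le m(U) = 1$. Then
\[
\|g\|_{L^p(U,m)}^p \ge \int_{A_\varepsilon} |g(x)|^p \, dm(x) \ge (\|g\|_\infty - \varepsilon)^p \, m(A_\varepsilon),
\]
so that $\|g\|_{L^p(U,m)} \ge (\|g\|_\infty - \varepsilon) \, m(A_\varepsilon)^{1/p}$. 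Since $m(A_\varepsilon)$ is a fixed positive number, $m(A_\varepsilon)^{1/p} \to 1$ as $p \to \infty$, giving $\liminf_{p\to\infty} \|g\|_{L^p(U,m)} \ge \|g\|_\infty - \varepsilon$. Sending $\varepsilon \to 0^+$ completes the argument.

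Since this is a classical measure-theoretic fact, I do not anticipate a genuine obstacle. The only minor subtlety is the case $\|g\|_\infty = \infty$, which I would handle by repeating the truncation argument with $A_N := \{x\in U: |g(x)| > N\}$ for every $N \in \mathbb{N}$; the hypothesis that $g$ lies in $L^p(U,m)$ for the $p$'s under consideration ensures each $\|g\|_{L^p(U,m)}$ is finite, while the same chain of inequalities yields $\liminf_{p\to\infty} \|g\|_{L^p(U,m)} \ge N$ for every $N$, hence equal to $+\infty = \|g\|_\infty$, as required.
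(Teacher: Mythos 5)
Your proof is correct and is the standard argument for this classical fact: the trivial upper bound from $m(U)=1$, the lower bound via the positive-measure level set $A_\varepsilon$ furnished by the definition of the essential supremum, and the truncation argument for the unbounded case. The paper itself supplies no proof at all — it merely recalls the lemma as a known result before using it in Theorem \ref{t:5.5} — so your write-up simply fills in the omitted (and entirely routine) details; there is nothing to reconcile between the two.
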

The forthcoming theorem may be proved in much the same way as Theorem \ref{t:5.1}.
\begin{theorem} \label{t:5.5}
Let $f \in \mathcal{S}(\mathbb{R})$  such that  $\mathcal{D}_k^M(f)$ vanishes outside the interval $(-\sigma_f, \sigma_f)$ and $1 \le p \le \infty$. Then 
\begin{equation} \label{e:6.1}
 \lim_{n \to \infty}  \Bigg \| \sum_{m=0}^\infty \frac{n^m\, \Delta^m_{k,M^{-1}}f}{m!} 
 \Bigg \|_{L_k^p(\mathbb{R})}^{\frac{1}{2n}}= e^{-\delta_f},
\end{equation}
where  
$$ \delta_f = \inf \{|\lambda/b|^2: \lambda \in \text{supp}\,\mathcal{D}_k^Mf\}$$ and  $\sigma_f$ defined in \eqref{e4.1}. 
\end{theorem}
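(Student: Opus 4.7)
The plan is to push the problem onto the Fourier side via identity \eqref{e:3.6} and then run the standard real Paley--Wiener machine in each $L_k^p$ range, in analogy with Theorem \ref{t:5.1} and the $L^2$ part of Theorem \ref{t:4.1}. Set
$$
g_n := \sum_{m=0}^{\infty}\frac{n^m\,\Delta_{k,M^{-1}}^m f}{m!}.
$$
Identity \eqref{e:3.6} immediately gives
$$
\mathcal{D}_k^M(g_n)(\lambda) = e^{-n\lambda^2/b^2}\,\mathcal{D}_k^M(f)(\lambda).
$$
By hypothesis $\text{supp}\,\mathcal{D}_k^M(f)$ is contained in $[-|b|\sigma_f,|b|\sigma_f]$, and by the very definition of $\delta_f$ we have $|\lambda/b|^2\ge\delta_f$ on this support, so the Gaussian multiplier is pointwise bounded above by $e^{-n\delta_f}$ there. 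This uniform decay, restricted to a compact support, is the whole engine of the proof.

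I would handle $p=2$ first. Plancherel's formula \eqref{eq :2.1} yields
$$
\|g_n\|_{L_k^2(\mathbb{R})}^2 = \int_{\mathbb{R}}e^{-2n\lambda^2/b^2}|\mathcal{D}_k^M(f)(\lambda)|^2\,d\mu_k(\lambda)\le e^{-2n\delta_f}\|f\|_{L_k^2(\mathbb{R})}^2,
$$
and taking $1/(2n)$-th roots gives the desired upper estimate. For the matching lower bound, fix $\epsilon>0$ and choose $\lambda_0\in\text{supp}\,\mathcal{D}_k^M(f)$ with $|\lambda_0/b|^2<\delta_f+\epsilon$; restricting the Plancherel integral to a small neighborhood $B_{\lambda_0}$ on which $\mathcal{D}_k^M(f)$ carries positive $L_k^2$-mass bounds it from below by $e^{-2n(\delta_f+\epsilon)}\int_{B_{\lambda_0}}|\mathcal{D}_k^M(f)|^2\,d\mu_k$, and letting $\epsilon\to 0^+$ closes the $L_k^2$ case.

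For $2<p\le\infty$ the upper estimate is a direct consequence of Hausdorff--Young \eqref{eq:2.3}:
$$
\|g_n\|_{L_k^p(\mathbb{R})}=\bigl\|\mathcal{D}_k^{M^{-1}}\bigl(e^{-n\lambda^2/b^2}\mathcal{D}_k^M(f)\bigr)\bigr\|_{L_k^p(\mathbb{R})}\lesssim e^{-n\delta_f}\|\mathcal{D}_k^M(f)\|_{L_k^{p'}(\mathbb{R})},
$$
whose right-hand side is finite because $\mathcal{D}_k^M(f)\in\mathcal{S}(\mathbb{R})$ has compact support. For the harder range $1\le p<2$, where Hausdorff--Young points the wrong way, I would replay the weighted $L_k^p$-to-$L_k^2$ embedding \eqref{eq:3.3}--\eqref{eq:3.5} from Theorem \ref{t:4.1} together with the seminorm equivalence of Theorem \ref{t3.8}; this rewrites $\|g_n\|_{L_k^p}$ as a weighted $L_k^2$-norm of $e^{-n\lambda^2/b^2}\mathcal{D}_k^M(f)$, whose compact support again produces the correct exponential rate.

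Finally, the lower bound throughout the range $1\le p\le\infty$ is inherited from the $p=2$ lower bound via the duality trick \eqref{e:4.10}. Combining Parseval \eqref{eq3.3} with \eqref{e:3.6}, and noting that the product $e^{-n\lambda^2/b^2}\cdot e^{-n\lambda^2/b^2}=e^{-2n\lambda^2/b^2}$ identifies $\mathcal{D}_k^M(g_{2n})$, one gets
$$
\|g_n\|_{L_k^2(\mathbb{R})}^2 = \int_{\mathbb{R}}f(x)\,\overline{g_{2n}(x)}\,d\mu_k(x)\le\|f\|_{L_k^q(\mathbb{R})}\,\|g_{2n}\|_{L_k^p(\mathbb{R})},
$$
with $q$ the conjugate exponent of $p$, so the already-proved $L_k^2$-lower estimate transfers to every $L_k^p$. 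The main obstacle is, exactly as in Theorem \ref{t:4.1}, the range $1\le p<2$; once the Sobolev detour through Theorem \ref{t3.8} is in place, the rest is routine bookkeeping with the compact-support hypothesis and the exponential decay of the Gaussian multiplier on the set $\{|\lambda/b|^2\ge\delta_f\}$.
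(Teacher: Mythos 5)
Your proposal follows essentially the same route as the paper: reduce to the Fourier side via \eqref{e:3.6}, settle $p=2$ by Plancherel (the paper packages your $\epsilon$-neighborhood lower bound as an application of Lemma \ref{l6.1} to a normalized measure supported on $\mathrm{supp}\,\mathcal{D}_k^M(f)$), obtain the upper bound for $p>2$ from Hausdorff--Young and for $p<2$ from the weighted $L_k^2$ embedding through Theorem \ref{t3.8}, and transfer the lower bound to all $p$ by duality. The only point your sketch glosses over is that the duality inequality $\|g_n\|_{L_k^2(\mathbb{R})}^2\le\|f\|_{L_k^q(\mathbb{R})}\,\|g_{2n}\|_{L_k^p(\mathbb{R})}$ controls only the even-index subsequence; the paper additionally factors $e^{-2n\lambda^2/b^2}=e^{-\lambda^2/b^2}\,e^{-(2n-1)\lambda^2/b^2}$ and repeats the Parseval--H\"older step to cover odd indices, and you would need the same (routine) extra line to conclude the limit along the full sequence.
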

\begin{proof}
We begin by proving the case $p=2$.
Let $f$ be a non-zero function in $\mathcal{S}(\mathbb{R})$ with supp $\mathcal{D}_k^M(f) \subset (-\sigma_f, \sigma_f)$. We consider
\begin{eqnarray*}
\Bigg \| \sum_{m=0}^\infty \frac{n^m\, \Delta^m_{k,M^{-1}}f}{m!} 
 \Bigg \|_{L_k^2(\mathbb{R})} = \Bigg \| \mathcal{D}_k^M \left(\sum_{m=0}^\infty \frac{n^m\, \Delta^m_{k,M^{-1}}f}{m!} \right)
 \Bigg \|_{L_k^2(\mathbb{R})}.
\end{eqnarray*}
Using \eqref{e:3.6} and taking $\frac{1}{2n}$ th power on both sides, we obtain the following 
\begin{eqnarray} \label{e6.2}
\Bigg \| \sum_{m=0}^\infty \frac{n^m\, \Delta^m_{k,M^{-1}}f}{m!} 
 \Bigg \|^2_{L_k^2(\mathbb{R})} & = & \int_{\mathbb{R}} |e^{-n\lambda^2/b^2}|^2\, |\mathcal{D}_k^M(f)|^2\, d\mu_k(\lambda)\\
\nonumber &=& \int_{\text{supp}\,\mathcal{D}_k^M(f)}e^{-2n\lambda^2/b^2}\,|\mathcal{D}_k^M(f)|^2\, d\mu_k(\lambda)\\
 \Bigg \| \sum_{m=0}^\infty \frac{n^m\, \Delta^m_{k,M^{-1}}f}{m!} 
 \Bigg \|^{\frac{1}{n}}_{L_k^2(\mathbb{R})} 
 \nonumber&=& \|f\|^{\frac{1}{2n}}_{L_k^2(\mathbb{R})} \left(  \int_{\text{supp}\,\mathcal{D}_k^M(f)}e^{-2n\lambda^2/b^2}\,|\mathcal{D}_k^M(f)|^2\, \frac{d\mu_k(\lambda)}{\|f\|_{L_k^2(\mathbb{R})}}\right)^{\frac{1}{2n}}.
\end{eqnarray}
If we take $g(\lambda) = e^{-\lambda^2/b^2}$,  $U= \text{supp}\,\mathcal{D}_k^M(f)$, $p=2n$, and the measure
$dm(\lambda) = \|f\|^{-1}_{L_k^2(\mathbb{R})}\, |\mathcal{D}_k^M(f)(\lambda)|^2\,d\mu_k(\lambda)$, then $ \int_{U} dm(\lambda) =1$. 
 Now, applying Lemma \ref{l6.1} based on the above ingredients, we deduce that
\begin{eqnarray}
\nonumber \lim_{n \to \infty} \Bigg \| \sum_{m=0}^\infty \frac{n^m\, \Delta^m_{k,M^{-1}}f}{m!} 
 \Bigg \|^{\frac{1}{n}}_{L_k^2(\mathbb{R})} &=& \sup_{\lambda \in \text{supp}\,\mathcal{D}_k^M(f)}e^{-|\lambda/b|^2} \lim_{n \to \infty} \|f\|^{\frac{1}{2n}}_{L_k^2(\mathbb{R})}\\
 &=& e^{-\delta_f}\label{e:6.2}.
\end{eqnarray}

Hence, the required result is proved for $p=2$.
By utilizing  \eqref{e:6.2}, we will prove \eqref{e:6.1} for other $p$. We first concentrate on proving the one-way inequality
\begin{equation*}
    \liminf_{n \to \infty}\Bigg \| \sum_{m=0}^\infty \frac{n^m\, \Delta^m_{k,M^{-1}}f}{m!} 
 \Bigg \|^{\frac{1}{2n}}_{L_k^p(\mathbb{R})} \ge e^{-\delta_f}, \quad 1 \le p \le \infty.
\end{equation*}
Let us consider \eqref{e6.2} and further using Parsavel formula \eqref{eq3.3} and  H\"older's inequality, we deduce that 
\begin{eqnarray*}
 \Bigg \| \sum_{m=0}^\infty \frac{n^m\, \Delta^m_{k,M^{-1}}f}{m!} 
 \Bigg \|^2_{L_k^2(\mathbb{R})} & = & \int_{\mathbb{R}} |e^{-n\lambda^2/b^2}|^2\, |\mathcal{D}_k^M(f)|^2\, d\mu_k(\lambda)\\
 &=& \int_{\mathbb{R}}e^{-2n\lambda^2/b^2}\,\mathcal{D}_k^M(f)(\lambda)\, \overline{\mathcal{D}_k^M(f)(\lambda)}\, d\mu_k(\lambda)\\
 &=& \int_{\mathbb{R}}\mathcal{D}_k^M \left( \sum_{m=0}^\infty \frac{(2n)^m\, \Delta^m_{k,M^{-1}}f}{m!} \right) (\lambda)\, \overline{\mathcal{D}_k^M(f)(\lambda)}\,d\mu_k(\lambda)\\
 &=&\int_{\mathbb{R}} \sum_{m=0}^\infty \frac{(2n)^m\, \Delta^m_{k,M^{-1}}f(x)}{m!}\, \overline{f(x)}\,d\mu_k(x)\\
 &\le& \|f\|_{L_k^q(\mathbb{R})}\,\Bigg\| \sum_{m=0}^\infty \frac{(2n)^m\, \Delta^m_{k,M^{-1}}f}{m!} \Bigg\|_{L_k^p(\mathbb{R})}.
\end{eqnarray*}
Taking limit infimum on both sides as $n$ goes to $\infty$, we have
\begin{equation} \label{e:6.4}
e^{-\delta_f} \le  \liminf_{n \to \infty} \Bigg\| \sum_{m=0}^\infty \frac{(2n)^m\, \Delta^m_{k,M^{-1}}f}{m!} \Bigg\|_{L_k^p(\mathbb{R})}^{\frac{1}{2n}}.
\end{equation}
Manipulating \eqref{e6.2} and using H\"older's inequality, we derive the following
\begin{eqnarray*}
 \Bigg \| \sum_{m=0}^\infty \frac{n^m\, \Delta^m_{k,M^{-1}}f}{m!} 
 \Bigg \|^2_{L_k^2(\mathbb{R})} & = & \int_{\mathbb{R}} |e^{-n\lambda^2/b^2}|^2\, |\mathcal{D}_k^M(f)|^2\, d\mu_k(\lambda)
 \end{eqnarray*}
 \begin{eqnarray*}
 &=& \int_{\mathbb{R}} e^{- \lambda^2/b^2} \,\overline{\mathcal{D}_k^M(f)(\lambda)}\,e^{-(2n-1)\lambda^2/b^2}\,\mathcal{D}_k^M(f)(\lambda)\,d\mu_k(\lambda)\\\\
 &=& \int_{\mathbb{R}} \mathcal{D}_k^M\left(\overline{\sum_{m=0}^\infty \frac{\Delta^m_{k,M^{-1}}f}{m!}}\right)(\lambda)\,\mathcal{D}_k^M \left(\sum_{m=0}^\infty \frac{(2n-1)^m\, \Delta^m_{k,M^{-1}}f}{m!}\right) (\lambda)\, d\mu_k(\lambda)\\\\
 &=& \int_{\mathbb{R}}\overline{\sum_{m=0}^\infty \frac{\Delta^m_{k,M^{-1}}f(x)}{m!}}\,\sum_{m=0}^\infty \frac{(2n-1)^m\, \Delta^m_{k,M^{-1}}f(x)}{m!}\, d\mu_k(x)\\\\
 &\le& \Bigg \| \sum_{m=0}^\infty \frac{ \Delta^m_{k,M^{-1}}f}{m!} 
 \Bigg \|_{L_k^q(\mathbb{R})}\,\Bigg \| \sum_{m=0}^\infty \frac{(2n-1)^m\, \Delta^m_{k,M^{-1}}f}{m!} \Bigg \|_{L_k^p(\mathbb{R})}.
 \end{eqnarray*}
 Taking limit infimum on both sides implies that
 \begin{eqnarray}
 \nonumber \liminf_{n \to \infty}\Bigg \| \sum_{m=0}^\infty \frac{n^m\, \Delta^m_{k,M^{-1}}f}{m!} 
 \Bigg \|^{\frac{1}{n}}_{L_k^2(\mathbb{R})} \le&&  \liminf_{n \to \infty} \Bigg \| \sum_{m=0}^\infty \frac{ \Delta^m_{k,M^{-1}}f}{m!} 
 \Bigg \|^{\frac{1}{2n}}_{L_k^q(\mathbb{R})}\\ 
 \nonumber
 \times &&\Bigg \| \sum_{m=0}^\infty \frac{(2n-1)^m\, \Delta^m_{k,M^{-1}}f}{m!} \Bigg \|^{\frac{1}{2n}}_{L_k^p(\mathbb{R})}\\
 \label{e:6.5}
  e^{-\delta_f} \le && \liminf_{n \to \infty} \Bigg \| \sum_{m=0}^\infty \frac{(2n-1)^m\, \Delta^m_{k,M^{-1}}f}{m!} \Bigg \|^{\frac{1}{2n}}_{L_k^p(\mathbb{R})}.  
\end{eqnarray}
Combining \eqref{e:6.4} and \eqref{e:6.5}, we get
\begin{equation} \label{e:6.6}
  e^{-\delta_f} \le \liminf_{n \to \infty} \Bigg \| \sum_{m=0}^\infty \frac{n^m\, \Delta^m_{k,M^{-1}}f}{m!} \Bigg \|^{\frac{1}{2n}}_{L_k^p(\mathbb{R})}.    
\end{equation}
Next, we have to prove the other way inequality
\begin{equation*}
 \limsup_{n \to \infty}\Bigg \| \sum_{m=0}^\infty \frac{n^m\, \Delta^m_{k,M^{-1}}f}{m!} 
 \Bigg \|^{\frac{1}{n}}_{L_k^p(\mathbb{R})} \le e^{-\delta_f}, \quad 1 \le p \le \infty.   
\end{equation*}
For $1\le p \le 2$, applying the same proof argument used in Theorem \ref{t:5.1}, we deduce that
\begin{eqnarray*}
\Bigg \| \sum_{m=0}^\infty \frac{n^m\, \Delta^m_{k,M^{-1}}f}{m!} 
 \Bigg \|_{L_k^p(\mathbb{R})} &\le& C^{\frac{1}{p}}\, \|(I-\Delta^m_{k,M^{-1}}) (e^{-n\lambda^2/b^2}\,\mathcal{D}_k^M(f))\|_{L_k^2(\mathbb{R})}\\
 &\le& \|e^{-n\lambda^2/b^2}\, \Phi_n\|_{L_k^2(\mathbb{R})}
\end{eqnarray*}
where $\text{supp}\,\Phi_n \subset \text{supp}\,\mathcal{D}_k^M(f)$ and $\| \Phi\|_{L_k^2(\mathbb{R})} \le C'\,n^2$.
\begin{eqnarray*}
    \limsup_{n \to \infty} \Bigg \|\sum_{m=0}^\infty \frac{n^m\, \Delta^m_{k,M^{-1}}f}{m!} 
 \Bigg \|_{L_k^p(\mathbb{R})}^{\frac{1}{n}} &\le& \sup_{\lambda \in \text{supp}\,\mathcal{D}_k^M(f)}e^{-(\lambda/b)^2}\, (C'n^2)^{\frac{1}{n}}\\
 &=&e^{-\delta_f}.
\end{eqnarray*}
For $2 \le p \le \infty$, using inverse formula \eqref{e:2.2} and H\"older's inequality, we derive that
\begin{eqnarray*}
\Bigg \|\sum_{m=0}^\infty \frac{n^m\, \Delta^m_{k,M^{-1}}f}{m!} 
 \Bigg \|_{L_k^p(\mathbb{R})}&=& \| \mathcal{D}_k^{M}(e^{-n\lambda^2/b^2})\,\mathcal{D}_k^{M^{-1}}(f)\|_{L_k^p(\mathbb{R})}\\
 &\le&  \frac{1}{\left(|b|^{k+1}\right)^{1-\frac{2}{q}}}\,\|e^{-n\lambda^2/b^2}\,\mathcal{D}_k^{M^{-1}}(f)\|_{L^q_k(\mathbb{R})}\\
 &\le& \frac{1}{\left(|b|^{k+1}\right)^{1-\frac{2}{q}}}\, \sup_{\lambda \in \text{supp}\,\mathcal{D}_k^M(f)}e^{-n(\lambda/b)^2}\,\|\mathcal{D}_k^{M^{-1}}(f)\|_{L^q_k(\mathbb{R})}\\
 \limsup_{n \to \infty}\Bigg \|\sum_{m=0}^\infty \frac{n^m\, \Delta^m_{k,M^{-1}}f}{m!} 
 \Bigg \|_{L_k^p(\mathbb{R})}^{\frac{1}{n}} &\le& \limsup_{n \to \infty}C_{q,b}^{\frac{1}{n}}\,e^{- \delta_f}\, \|\mathcal{D}_k^{M^{-1}}(f)\|_{L^q_k(\mathbb{R})}^{\frac{1}{n}}.
 \end{eqnarray*}
Thus,
\begin{equation*}
     \limsup_{n \to \infty}\Bigg \|\sum_{m=0}^\infty \frac{n^m\, \Delta^m_{k,M^{-1}}f}{m!} 
 \Bigg \|_{L_k^p(\mathbb{R})}^{\frac{1}{n}} \le e^{- \delta_f}, \quad \text{for}\quad 1\le p \le \infty.
\end{equation*}
In particular,
\begin{equation} \label{e:6.7}
  \limsup_{n \to \infty}\Bigg \|\sum_{m=0}^\infty \frac{n^m\, \Delta^m_{k,M^{-1}}f}{m!} 
 \Bigg \|_{L_k^p(\mathbb{R})}^{\frac{1}{2n}} \le e^{- \delta_f}, \quad \text{for}\quad 1\le p \le \infty. 
\end{equation}
Finally, combining \eqref{e:6.6} and \eqref{e:6.7}, we get the desired result.
\end{proof}
\begin{theorem}[Boas-type theorem] \label{t:5.6}
Let $f\in \mathcal{S}(\mathbb{R})$ vanishes in some interval $(-r,r)$ if and only if 
\begin{equation*}
     \lim_{n \to \infty}  \Bigg \| \sum_{m=0}^\infty \frac{n^m\, \Delta^m_{k,M^{-1}}f}{m!} 
 \Bigg \|_{L_k^p(\mathbb{R})}^{\frac{1}{n}} \le e^{-r^2}, \qquad \text{for}\, \,\,\,1\le p\le \infty.
\end{equation*}
\end{theorem}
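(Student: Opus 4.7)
The plan is to derive Theorem~\ref{t:5.6} as a near-immediate corollary of Theorem~\ref{t:5.5}, since the latter already produces the sharp asymptotic rate for the expression $\sum_{m=0}^\infty n^m\,\Delta^m_{k,M^{-1}} f/m!$ in every $L_k^p$. Concretely, Theorem~\ref{t:5.5} gives, for any $f \in \mathcal{S}(\mathbb{R})$ and $1 \le p \le \infty$, the equality $\lim_{n\to\infty}\bigl\|\sum_{m} n^m\,\Delta^m_{k,M^{-1}} f/m!\bigr\|^{1/(2n)}_{L_k^p(\mathbb{R})} = e^{-\delta_f}$; squaring converts this into the $1/n$-power limit equal to $e^{-2\delta_f}$. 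Thus Theorem~\ref{t:5.6} reduces to matching the exponent $2\delta_f$ with $r^2$ precisely when $f$ vanishes on $(-r,r)$.

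For the necessity direction, I would invoke Theorem~\ref{t:5.5} with the roles of $M$ and $M^{-1}$ swapped, using the inversion formula \eqref{e:2.2} and the observation that the $(1,2)$-entries of $M$ and $M^{-1}$ differ only by a sign so $|b|$ is preserved by the swap. Under this duality, the infimum that plays the role of $\delta_f$ is taken over $\text{supp}(f)$ rather than over $\text{supp}(\mathcal{D}_k^M f)$, so the hypothesis $f \equiv 0$ on $(-r,r)$ forces it to be at least $r^2/2$, which yields the desired upper bound $e^{-r^2}$ on the $1/n$-limit. The passage from $p=2$ to general $1 \le p \le \infty$ then proceeds exactly as in Theorem~\ref{t:5.5}: for $1 \le p \le 2$ one uses the Hausdorff–Young inequality \eqref{eq:2.3} together with the support-truncated companion function $\Phi_n$, and for $2 \le p \le \infty$ one combines the inverse LCDT with H\"older's inequality.

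For sufficiency I would argue by contraposition. If $f$ does not vanish on $(-r,r)$, then some subset $U \subset (-r,r)$ carries positive $\mu_k$-mass of $|f|^2$, and applying Lemma~\ref{l6.1} in the swapped setting — mirroring the computation \eqref{e6.2} but with the roles of transform and function exchanged — produces a lower bound of the form $e^{-\rho^2}$ with $\rho < r$ for the $1/n$-limit, contradicting the standing assumption $\le e^{-r^2}$.

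The principal obstacle is making the $M \leftrightarrow M^{-1}$ duality step watertight: Theorem~\ref{t:5.5} is phrased on the spectral side (support of $\mathcal{D}_k^M f$), whereas Theorem~\ref{t:5.6} is phrased on the spatial side (support of $f$). Pushing the proof of Theorem~\ref{t:5.5} through with $M$ replaced by $M^{-1}$ throughout requires checking that the intertwining identity \eqref{e:3.6} has a symmetric dual counterpart and that the Gaussian factor $e^{-n\lambda^2/b^2}$ is invariant under the swap. Once this bookkeeping is carried out — using that $(M^{-1})^{-1} = M$, $\mathcal{D}_k^{M^{-1}} \circ \mathcal{D}_k^M = \text{Id}$ on $\mathcal{S}(\mathbb{R})$, and the invariance of $|b|$ — the conversion between $\delta_f$ and $r^2$ is clean and the remaining estimates are routine applications of Plancherel, Parseval \eqref{eq3.3}, and H\"older, exactly as in Theorem~\ref{t:5.5}.
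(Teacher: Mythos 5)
Your proposal takes a much more ambitious route than the paper, whose entire proof of Theorem~\ref{t:5.6} is the single sentence that the inequality ``directly follows from the second part of the proof of Theorem~\ref{t:5.5}'' --- i.e.\ from the bound $\limsup_n\bigl\|\sum_m n^m\Delta^m_{k,M^{-1}}f/m!\bigr\|^{1/n}_{L_k^p}\le e^{-\delta_f}$ with $\delta_f$ computed from $\mathrm{supp}\,\mathcal{D}_k^M(f)$. That argument never sees the support of $f$ itself and only addresses one implication, so the paper is implicitly reading the hypothesis as a condition on $\mathcal{D}_k^M(f)$. You correctly identified this spatial/spectral mismatch as the crux --- that is to your credit, and the paper glosses over it --- but your proposed repair does not close the gap.

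The problem is the duality step. Replacing $M$ by $M^{-1}$ in Theorem~\ref{t:5.5} and taking $g=\mathcal{D}_k^M f$ (so that $\mathcal{D}_k^{M^{-1}}g=f$ and the infimum is indeed over $\mathrm{supp}\,f$) produces an asymptotic for $\bigl\|\sum_m n^m\Delta^m_{k,M}(\mathcal{D}_k^M f)/m!\bigr\|$, which by the dual of \eqref{e:3.6} equals $\bigl\|\mathcal{D}_k^M\bigl(e^{-n(\cdot)^2/b^2}f\bigr)\bigr\|$. That is a genuinely different quantity from the one appearing in Theorem~\ref{t:5.6}: by \eqref{e:3.6} and Plancherel, $\bigl\|\sum_m n^m\Delta^m_{k,M^{-1}}f/m!\bigr\|_{L_k^2}=\bigl\|e^{-n\lambda^2/b^2}\,\mathcal{D}_k^M f\bigr\|_{L_k^2}$, whose $n$-th root limit is determined \emph{solely} by $\inf\{|\lambda/b|:\lambda\in\mathrm{supp}\,\mathcal{D}_k^M f\}$ and carries no information about where $f$ vanishes; no bookkeeping with $(M^{-1})^{-1}=M$ and $|b'|=|b|$ converts one expression into the other, and the same defect undermines your contrapositive argument for sufficiency. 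In fact the equivalence as literally stated is not provable at all: take $f=\mathcal{D}_k^{M^{-1}}(h)$ with $h$ a bump supported away from the origin; then the displayed inequality holds for small $r$ while $f$ is generically nonzero on every neighbourhood of $0$. Either the hypothesis must be read as ``$\mathcal{D}_k^M(f)$ vanishes on $(-r,r)$'' (which is what the paper's one-line proof tacitly assumes, and even then the constant should involve $b$ and the $1/n$ versus $1/(2n)$ normalisation must be reconciled --- your ``$r^2/2$'' has the same $b$-dependence issue), or the heat sum must be applied to $\mathcal{D}_k^M(f)$ rather than to $f$, which is essentially the theorem your swapped argument would prove but is not the theorem as stated.
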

\begin{proof}
The required inequality directly follows from the second part of the proof of Theorem \ref{t:5.5}.
\end{proof}
\subsection*{Acknowledgments:} 
The second author acknowledges the funding received from Anusandhan National Research Foundation SURE (SUR/2022/005678).
\subsection*{Conflict of interest:} No potential conflict of interest was reported by the author.

\bibliographystyle{amsplain}

\end{document}